
\documentclass[11pt]{article}
\usepackage{amssymb}
\usepackage{amsmath}
\usepackage{amsthm} 
\usepackage{times}
\usepackage[mathscr]{eucal}
\usepackage{mathrsfs}

\setlength{\topmargin}{-8mm}
\setlength{\textheight}{223mm}
\setlength{\textwidth}{150mm}
\setlength{\oddsidemargin}{5mm}
\setlength{\parindent}{14pt}
\setlength{\parskip}{1.5pt}

\newtheorem{lemma}{Lemma}[section]

\newtheorem{theorem}{Theorem}[section]

\allowdisplaybreaks[4] 

\renewenvironment{proof}{
\noindent{\bf Proof.} \rm}{\penalty-20\null\hfill $\square$}

\numberwithin{equation}{section}

\newcommand{\bbD}{{\mathbb D}}

\newcommand{\N}{{\mathbb N}}

\newcommand{\R}{{\mathbb R}}
\newcommand{\bbT}{{\mathbb T}}


\newcommand{\bfe}{\mathbf{e}}
\newcommand{\bff}{\mathbf{f}}
\newcommand{\bfg}{\mathbf{g}}
\newcommand{\bfh}{\mathbf{h}}

\newcommand{\bfn}{\mathbf{n}}

\newcommand{\bfu}{\mathbf{u}}
\newcommand{\bfv}{\mathbf{v}}
\newcommand{\bfw}{\mathbf{w}}
\newcommand{\bfx}{\mathbf{x}}
\newcommand{\bfy}{\mathbf{y}}
\newcommand{\bfz}{\mathbf{z}}


\newcommand{\bfzeta}{\mbox{\boldmath $\zeta$}}

\newcommand{\bfphi}{\mbox{\boldmath $\phi$}}

\newcommand{\bfvarphi}{\mbox{\boldmath $\varphi$}}
\newcommand{\bfpsi}{\mbox{\boldmath $\psi$}}


\newcommand{\bfC}{\mathbf{C}}

\newcommand{\bfE}{\mathbf{E}}
\newcommand{\bfF}{\mathbf{F}}
\newcommand{\bfG}{\mathbf{G}}

\newcommand{\bfL}{\mathbf{L}}

\newcommand{\bfU}{\mathbf{U}}

\newcommand{\bfW}{\mathbf{W}}

\newcommand{\cA}{{\cal A}}
\newcommand{\cB}{{\cal B}}

\newcommand{\cF}{{\cal F}}

\newcommand{\cP}{{\cal P}}


\newcommand{\gB}{\mathfrak{B}}


\newcommand{\rmd}{\mathrm{d}}
\newcommand{\rme}{\mathrm{e}}

\renewcommand{\div}{\mathrm{div}\,}

\newcommand{\bfzero}{\mathbf{0}}

\newcommand{\supp}{\mathrm{supp}\,}

\newcommand{\curl}{\mathbf{curl}\, }
\newcommand{\br}{\hbox to 0.7pt{}}

\newcommand{\bbTd}{\bbT_{\hspace{-0.5pt} \rm d}}
\newcommand{\blangle}{\bigl\langle}
\newcommand{\brangle}{\bigr\rangle}
\newcommand{\bllangle}{\blangle\hspace{-2.5pt}\blangle}
\newcommand{\brrangle}{\brangle\hspace{-2.5pt}\brangle}
\newcommand{\Blangle}{\Bigl\langle}
\newcommand{\Brangle}{\Bigr\rangle}

\newcommand{\llangle}{\langle\hspace{-1.9pt}\langle}
\newcommand{\rrangle}{\rangle\hspace{-1.9pt}\rangle}

\newcommand{\Cns}{\bfC^{\infty}_{0,\sigma}}
\newcommand{\Ls}{\bfL_{\tau,\sigma}}
\newcommand{\Ps}[1]{P_{#1}}

\newcommand{\cPs}[1]{\cP_{#1}}

\newcommand{\Wt}{\bfW_{\tau}}
\newcommand{\Wts}{\bfW_{\tau,\br\sigma}}
\newcommand{\Wtc}{\bfW_{\tau,\br c}}
\newcommand{\Wtsc}{\bfW_{\tau,\br\sigma,\br c}}

\newcommand{\Aq}{A_q}

\newcounter{constants}
\setcounter{constants}{0}
\newcommand{\cn}[2]{\addtocounter{constants}{1}
\newcounter{c#1#2} \setcounter{c#1#2}{\value{constants}}}
\newcommand{\cc}[2]{c_{\arabic{c#1#2}}}

\begin{document}

\title{\LARGE \bf A Pressure Associated with a Weak Solution to
the Navier--Stokes Equations with Navier's Boundary Conditions}

\author{Ji\v{r}\'{\i} Neustupa, \ \v{S}\'arka Ne\v{c}asov\'a, \
Petr Ku\v{c}era \footnote{Authors' address: Czech Academy of
Sciences, Institute of Mathematics, \v{Z}itn\'a 25, 115 67 Praha
1, Czech Re\-pub\-lic, e--mails: neustupa@math.cas.cz,
matus@math.cas.cz, petr.kucera@cvut.cz}}

\date{}

\maketitle

\begin{abstract}
We show that if $\bfu$ is a weak solution to the Navier--Stokes
initial--boundary value problem with Navier's slip boundary
conditions in $Q_T:=\Omega\times(0,T)$, where $\Omega$ is a domain
in $\R^3$, then an associated pressure $p$ exists as a
distribution with a certain structure. Furthermore, we also show
that if $\Omega$ is a ``smooth'' domain in $\R^3$ then the
pressure is represented by a function in $Q_T$ with a certain rate
of integrability. Finally, we study the regularity of the pressure
in sub-domains of $Q_T$, where $\bfu$ satisfies Serrin's
integrability conditions.
\end{abstract}

\vspace{1mm} \noindent
{\it AMS math.~classification (2010):} \

\noindent {\it Keywords:} \ Navier--Stokes equations, Navier's slip
boundary conditions, weak solutions, associated pressure,
regularity.

\section{Introduction} \label{S1}

{\bf 1.1. The Navier--Stokes initial--boundary value problem with
Navier's boundary conditions.} \ Let $T>0$ and $\Omega$ be a
locally Lipschitz domain in $\R^3$, satisfying the condition

\begin{list}{}
{\setlength{\topsep 2pt}
\setlength{\itemsep 1pt}
\setlength{\leftmargin 18pt}
\setlength{\rightmargin 0pt}
\setlength{\labelwidth 10pt}}

\item[(i)]
{\it there exists a sequence of bounded Lipschitz domains
$\Omega_1\subseteq\Omega_2\subseteq\dots$ such that
$\Omega=\bigcup_{n=1}^{\infty}\Omega_n$ and
$(\partial\Omega_n\cap\Omega)\subset\{\bfx\in\R^3;\ |\bfx|\geq
n\}$ for all $n\in\N$.}

\end{list}

\noindent
Note that condition (i) is automatically satisfied e.g.~if
$\Omega=\R^3$ or $\Omega$ is a half-space in $\R^3$ or $\Omega$ is
a bounded or exterior Lipschitz domain in $\R^3$. Put
$Q_T:=\Omega\times(0,T)$ and $\Gamma_T:=
\partial\Omega\times(0,T)$. We deal with the Navier--Stokes system
\begin{align}
\partial_t\bfu+\bfu\cdot\nabla\bfu+\nabla p\ &=\ \nu\Delta\bfu+\bff &&
\mbox{in}\ Q_T, \label{1.1} \\
\div\bfu\ &=\ 0 && \mbox{in}\ Q_T \label{1.2}
\end{align}
with the slip boundary conditions
\begin{equation}
\mbox{a)} \quad \bfu\cdot\bfn=0, \qquad \mbox{b)} \quad
[\bbTd(\bfu)\cdot\bfn]_{\tau}+\gamma\bfu=\bfzero \qquad \mbox{on}\
\Gamma_T \label{1.3}
\end{equation}
and the initial condition
\begin{equation}
\bfu\, \bigl|_{t=0} \bigr.\ =\ \bfu_0. \label{1.4}
\end{equation}
Equations (\ref{1.1}), (\ref{1.2}) describe the motion of a
viscous incompressible fluid in domain $\Omega$ in the time
interval $(0,T)$. The unknowns are $\bfu$ (the velocity) and $p$
(the pressure). Factor $\nu$ in equation (\ref{1.1}) denotes the
kinematic coefficient of viscosity (it is supposed to be a
positive constant) and $\bff$ denotes an external body force. The
outer normal vector field on $\Omega$ is denoted by $\bfn$,
$\bbTd(\bfu)$ denotes the dynamic stress tensor,
$-\bbTd(\bfu)\cdot\bfn$ is the force with which the fluid acts on
the boundary of $\Omega$ (we put the minus sign in front of
$\bbTd(\bfu)\cdot\bfn$ because $\bfn$ is the outer normal vector
and we express the force acting on $\partial\Omega$ from the
interior of $\Omega$), subscript $\tau$ denotes the tangential
component and $\gamma$ (which is supposed to be a nonnegative
constant) is the coefficient of friction between the fluid and the
boundary of $\Omega$. The density of the fluid is supposed to be
constant and equal to one. In an incompressible Newtonian fluid,
the dynamic stress tensor satisfies
$\bbTd(\bfu)=2\nu\br\bbD(\bfu)$, where the rate of deformation
tensor $\bbD(\bfu)$ equals $(\nabla\bfu)_s$ (the symmetric part of
$\nabla\bfu$).

Equations (\ref{1.1}), (\ref{1.2}) are mostly studied together
with the no--slip boundary condition
\begin{equation}
\bfu\ =\ \bfzero \label{1.5}
\end{equation}
on $\Gamma_T$. However, an increasing attention in recent years
has also been given to boundary conditions (\ref{1.3}), which have
a good physical sense. While condition (\ref{1.3}a) expresses the
impermeability of $\partial\Omega$, condition (\ref{1.4}b)
expresses the requirement that the tangential component of the
force with which the fluid acts on the boundary be proportional to
the tangential velocity. Conditions (\ref{1.3}) are mostly called
Navier's boundary conditions, because they were proposed by
H.~Navier in the first half of the 19th century.

\vspace{4pt} \noindent
{\bf 1.2. Briefly on the qualitative theory of the problem
(\ref{1.1})--(\ref{1.4}).} \ As to the qualitative theory for the
problem (\ref{1.1})--(\ref{1.4}), it is necessary to note that it
is not at the moment so elaborated as in the case of the no-slip
boundary condition (\ref{1.5}). Nevertheless, the readers can find
the definition of a weak solution to the problem
(\ref{1.1})--(\ref{1.4}) and the proof of the global in time
existence of a weak solution e.g.~in the papers \cite{ChQi} (with
$\bff=\bfzero$), \cite{NePe2} (in a time-varying domain $\Omega$)
and \cite{Saal} (in a half-space). We repeat the definition in
section \ref{S3}. Theorems on the local in time existence of a
strong solution are proven e.g.~in \cite{ChQi} (for
$\bff=\bfzero$) and \cite{KuNe} (in a smooth bounded domain
$\Omega$). Steady problems are studied in \cite{AmRe1} and
\cite{AmRe2}.

\vspace{4pt} \noindent
{\bf 1.3. On the contents and results of this paper.} \ We shall
see in section \ref{S3} that the definition of a weak solution to
the problem (\ref{1.1})--(\ref{1.4}) does not explicitly contain
the pressure. (This situation is well known from the theory of the
Navier--Stokes equations with the no--slip boundary condition
(\ref{1.5}).) This is also why we usually understand, under a
``weak solution'', only the velocity $\bfu$ and not the pair
$(\bfu,p)$. There arises a question whether one can naturally
assign some pressure $p$ to a weak solution $\bfu$. It is known
from the theory of the Navier--Stokes equations with the no--slip
boundary condition (\ref{1.5}) that the pressure, associated with
a weak solution, generally exists only as a distribution in $Q_T$.
(See \cite{Li}, \cite{Te}, \cite{Si}, \cite{Ga2}, \cite{So},
\cite{Wo} and \cite{Ne2}.) The distribution is regular (i.e.~it
can be identified with a function with some rate of integrability
in $Q_T$) if domain $\Omega$ is ``smooth'', see \cite{SoWa},
\cite{GiSo} and \cite{Ne2}. In section \ref{S4} of this paper, we
show that one can naturally assign a pressure, as a distribution,
to a weak solution to the Navier--Stokes equations with Navier's
boundary conditions (\ref{1.3}), too. Moreover, we show in section
\ref{S4} that the associated pressure is not just a distribution,
satisfying together with the weak solution $\bfu$ equations
(\ref{1.1}), (\ref{1.2}) in the sense of distributions in $Q_T$
(where the distributions are applied to test functions from
$\bfC^{\infty}_0(Q_T)$), but that it is a distribution with a
certain structure, which can be applied to functions from
$\bfC^{\infty}(\overline{Q_T})$ with a compact support in
$\overline{\Omega}\times(0,T)$ and with the normal component equal
to zero on $\Gamma_T$. In section \ref{S5}, we show that if domain
$\Omega$ is smooth and bounded then the associated pressure is a
function with a certain rate of integrability in $Q_T$. Finally,
in section \ref{S6}, we study the regularity of the associated
pressure in a sub-domain $\Omega'\times(t_1,t_2)$ of $Q_T$, where
$\bfu$ satisfies Serrin's integrability conditions. We shall see
that the regularity depends on boundary conditions, satisfied by
the velocity on $\Gamma_T$.


\section{Notation and auxiliary results} \label{S2}

{\bf 2.1. Notation.} \ We use this notation of functions, function
spaces, dual spaces, etc.:

\begin{list}{$\circ$}
{\setlength{\topsep 2pt}
\setlength{\itemsep 1pt}
\setlength{\leftmargin 14pt}
\setlength{\rightmargin 0pt}
\setlength{\labelwidth 6pt}}

\item
$\Omega_0\subset\subset\Omega$ means that $\Omega_0$ is a bounded
domain in $\R^3$ such that $\overline{\Omega_0}\subset\Omega$.

\item
Vector functions and spaces of vector functions are denoted by
boldface letters.

\item
$\Cns(\Omega)$ denotes the linear space of infinitely
differentiable divergence-free vector functions in $\Omega$, with
a compact support in $\Omega$.

\item
Let $1<q<\infty$. We denote by $\Ls^q(\Omega)$ the closure of
$\Cns(\Omega)$ in $\bfL^q(\Omega)$. The subscript $\tau$ means
that functions from $\Ls^q(\Omega)$ have the normal component on
$\partial\Omega$ equal to zero in a certain weak sense of traces
and they are therefore tangential on $\partial\Omega$. The
subscript $\sigma$ expresses the fact that functions from
$\Ls^q(\Omega)$ are divergence--free in $\Omega$ in the sense of
distributions. (See e.g.~\cite{Ga1} for more information.)

\item
Put $\bfG_{q}(\Omega):=\{\nabla\psi\in\bfL^{q} (\Omega);\ \psi\in
W^{1,q}_{\rm loc}(\Omega)\}$. $\bfG_{q}(\Omega)$ is a closed
subspace of $\bfL^{q}(\Omega)$, see \cite[Exercise III.1.2] {Ga1}.

\item
$\Wt^{1,q}(\Omega):=\{\bfv\in\bfW^{1,q}(\Omega);\,
\bfv\cdot\bfn=0\ \mbox{a.e.~on}\ \partial\Omega\}$, \\ [3pt]
$\Wtc^{1,q}(\Omega):=\bigl\{ \bfvarphi\in\Wt^{1,q}(\Omega)$,
$\supp\bfvarphi$ is a compact set in $\R^3\bigr\}$, \\ [3pt]
$\Wts^{1,q}(\Omega):=\bfW^{1,q}(\Omega)\cap\Ls^q(\Omega)\equiv
\Wt^{1,q}(\Omega)\cap\Ls^q(\Omega)$, \\ [3pt]
$\Wtsc^{1,q}(\Omega):=\Wts^{1,q}(\Omega)\cap\Wtc^{1,q}(\Omega)$.

\item
The norms in $L^q(\Omega)$ and in $\bfL^q(\Omega)$ are denoted by
$\|\, .\, \|_q$. The norms in $W^{k,q}(\Omega)$ and in
$\bfW^{k,q}(\Omega)$ (for $k\in\N$) are denoted by $\|\, .\,
\|_{k,q}$. If the considered domain differs from $\Omega$ then we
use e.g.~the notation $\|\, .\, \|_{q;\, \Omega'}$ or $\|\, .\,
\|_{k,q;\, \Omega'}$, etc. The scalar products in $L^2(\Omega)$
and in $\bfL^2(\Omega)$ are denoted by $(\, .\, ,\, .\, )_2$ and
the scalar products in $W^{1,2}(\Omega)$ and in
$\bfW^{1,2}(\Omega)$ are denoted by $(\, .\, ,\, .\, )_{1,2}$.

\item
The conjugate exponent is denoted by prime, so that
e.g.~$q'=q/(q-1)$. $\Wt^{-1,q'}(\Omega)$ denotes the dual space to
$\Wt^{1,q}(\Omega)$ and $\Wts^{-1,q'}(\Omega)$ denotes the dual
space to $\Wts^{1,q}(\Omega)$. The norm in $\Wt^{-1,q'}(\Omega)$,
respectively $\Wts^{-1,q'}(\Omega)$, is denoted by $\|\, .\,
\|_{-1,q'}$, respectively by $\|\, .\, \|_{-1,q';\, \sigma}$.

\item
The duality between elements of $\Wt^{-1,q'}(\Omega)$ and
$\Wt^{1,q}(\Omega)$ is denoted by $\langle\, .\, ,\, .\,
\rangle_{\tau}$ and the duality between elements of
$\Wts^{-1,q'}(\Omega)$ and $\Wts^{1,q}(\Omega)$ is denoted by
$\langle\, .\, ,\, .\, \rangle_{\tau,\sigma}$.

\item
$\Wts^{1,q}(\Omega)^{\perp}$ denotes the space of annihilators of
$\Wts^{1,q}(\Omega)$ in $\Wt^{-1,q'}(\Omega)$. i.e.~the space
$\bigl\{\bfg\in\Wt^{-1,q'}(\Omega)$;
$\forall\bfvarphi\in\Wts^{1,q}(\Omega):
\langle\bfg,\bfvarphi\rangle_{\tau}=0\bigr\}$.

\end{list}

\noindent
{\bf 2.2. $\bfL^{q'}(\Omega)$ and $\Ls^{q'}(\Omega)$ as subspaces
of $\Wt^{-1,q'}(\Omega)$ and $\Wts^{-1,q'}(\Omega)$,
respectively.} \ The Lebesgue space $\bfL^{q'}(\Omega)$ can be
identified with a subspace of $\Wt^{-1,q'}(\Omega)$ so that if
$\bfg\in\bfL^{q'}(\Omega)$ then
\begin{equation}
\langle\bfg,\bfvarphi\rangle_{\tau}\ :=\
\int_{\Omega}\bfg\cdot\bfvarphi\; \rmd\bfx \label{2.1}
\end{equation}
for all $\bfvarphi\in \Wt^{1,q}(\Omega)$. Similarly,
$\Ls^{q'}(\Omega)$ can be identified with a subspace of
$\Wts^{-1,q'}(\Omega)$ so that if $\bfg\in\Ls^{q'}(\Omega)$ then
\begin{equation}
\langle\bfg,\bfvarphi\rangle_{\tau,\sigma}\ :=\
\int_{\Omega}\bff\cdot\bfvarphi\; \rmd\bfx \label{2.2}
\end{equation}
for all $\bfvarphi\in\Wts^{1,q}(\Omega)$. Thus, if
$\bfg\in\Ls^{q'}(\Omega)$ and $\bfvarphi\in\Wts^{1,q}(\Omega)$
then the dualities $\langle\bfg,\bfvarphi\rangle_{\tau}$ and
$\langle\bfg,\bfvarphi\rangle_{\tau,\sigma}$ coincide.

Note that if $\, \bfg\in\bfL^{q'}(\Omega)$ then the integral on
the right hand side of (\ref{2.1}) also defines a bounded linear
functional on $\Wts^{1,q}(\Omega)$. This, however, does not mean
that $\bfL^{q'}(\Omega)$ can be identified with a subspace of
$\Wts^{-1,q'}(\Omega)$. The reason is, for instance, that the
spaces $\bfL^{q'}(\Omega)$ and $\Wts^{-1,q'}(\Omega)$ do not have
the same zero element. (If $\psi$ is a non-constant function in
$C^{\infty}_0(\Omega)$ then $\nabla\psi$ is a non-zero element of
$\bfL^{q'}(\Omega)$, but it induces the zero element of
$\Wts^{-1,q'}(\Omega)$.)

\vspace{4pt} \noindent
{\bf 2.3. Definition and some properties of operator $\cPs{q'}$.}
\ $\Wts^{1,q}(\Omega)$ is a closed subspace of
$\Wt^{1,q}(\Omega)$. If $\bfg\in\Wt^{-1,q'}(\Omega)$ (i.e.~$\bff$
is a bounded linear functional on $\Wt^{1,q}(\Omega)$) then we
denote by $\cPs{q'}\bff$ the element of $\Wts^{-1,q'}(\Omega)$,
defined by the equation
\begin{displaymath}
\langle \cPs{q'}\bfg,\bfvarphi\rangle_{\tau,\sigma}\ :=\
\langle\bfg,\bfvarphi\rangle_{\tau} \qquad \mbox{for all}\
\bfvarphi\in\Wts^{1,q}(\Omega).
\end{displaymath}
Obviously, $\cPs{q'}$ is a linear operator from
$\Wt^{-1,q'}(\Omega)$ to $\Wts^{-1,q'}(\Omega)$, whose domain is
the whole space $\Wt^{-1,q'}(\Omega)$.

\begin{lemma} \label{L2.1}
The operator $\cPs{q'}$ is bounded, its range is
$\Wts^{-1,q'}(\Omega)$ and $\cPs{q'}$ is not one-to-one.
\end{lemma}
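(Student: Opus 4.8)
The plan is to establish the three assertions in the order they are stated, relying on the elementary functional-analytic structure already set up in §2.3. First, for boundedness: given $\bfg\in\Wt^{-1,q'}(\Omega)$, the functional $\cPs{q'}\bfg$ is by definition the restriction of $\bfg$ to the closed subspace $\Wts^{1,q}(\Omega)\subseteq\Wt^{1,q}(\Omega)$. Hence for every $\bfvarphi\in\Wts^{1,q}(\Omega)$ we have $|\langle\cPs{q'}\bfg,\bfvarphi\rangle_{\tau,\sigma}|=|\langle\bfg,\bfvarphi\rangle_{\tau}|\leq\|\bfg\|_{-1,q'}\,\|\bfvarphi\|_{1,q}$, so $\|\cPs{q'}\bfg\|_{-1,q';\,\sigma}\leq\|\bfg\|_{-1,q'}$, which is the claimed boundedness with operator norm at most one.

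For surjectivity onto $\Wts^{-1,q'}(\Omega)$: take an arbitrary $\bfh\in\Wts^{-1,q'}(\Omega)$, i.e.\ a bounded linear functional on the Banach space $\Wts^{1,q}(\Omega)$. By the Hahn--Banach theorem it extends to a bounded linear functional $\bfg$ on the larger space $\Wt^{1,q}(\Omega)$ with the same norm; that extension is an element of $\Wt^{-1,q'}(\Omega)$, and by construction its restriction to $\Wts^{1,q}(\Omega)$ is $\bfh$, i.e.\ $\cPs{q'}\bfg=\bfh$. This shows the range is all of $\Wts^{-1,q'}(\Omega)$ (and in fact, combined with the norm-preserving extension, that $\cPs{q'}$ is a quotient map, though only surjectivity is claimed).

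For the failure of injectivity: the kernel of $\cPs{q'}$ is precisely the annihilator $\Wts^{1,q}(\Omega)^{\perp}$ inside $\Wt^{-1,q'}(\Omega)$, so it suffices to exhibit one nonzero element there, i.e.\ a nonzero $\bfg\in\Wt^{-1,q'}(\Omega)$ that vanishes on all of $\Wts^{1,q}(\Omega)$. The natural candidate is $\bfg=\nabla\psi$ for a suitable scalar $\psi$: if $\bfvarphi\in\Wts^{1,q}(\Omega)$ is divergence-free and tangential on $\partial\Omega$, then $\langle\nabla\psi,\bfvarphi\rangle_{\tau}=\int_\Omega\nabla\psi\cdot\bfvarphi\,\rmd\bfx=-\int_\Omega\psi\,\div\bfvarphi\,\rmd\bfx=0$ after integration by parts (the boundary term drops because $\bfvarphi\cdot\bfn=0$), at least when $\psi$ is smooth with enough decay; taking $\psi\in C^\infty_0(\Omega)$ non-constant makes this rigorous and guarantees $\nabla\psi\neq\bfzero$ as an element of $\bfL^{q'}(\Omega)\hookrightarrow\Wt^{-1,q'}(\Omega)$, exactly as in the parenthetical remark at the end of §2.2. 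Hence $\cPs{q'}$ is not one-to-one.

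The only genuinely delicate point is the last one: one must be sure the chosen $\nabla\psi$ is nonzero \emph{as a functional} on $\Wt^{1,q}(\Omega)$, not merely nonzero in $\bfL^{q'}(\Omega)$; this is where the identification (\ref{2.1}) of $\bfL^{q'}(\Omega)$ as a genuine subspace of $\Wt^{-1,q'}(\Omega)$ is used (there is no cancellation against the larger test space because $\Wt^{1,q}(\Omega)$ contains functions with nonzero divergence). The boundedness and surjectivity parts are routine Hahn--Banach restriction/extension arguments and should take only a line or two each.
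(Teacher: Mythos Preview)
Your proposal is correct and follows essentially the same approach as the paper: boundedness from the definition of the dual norms, surjectivity via a Hahn--Banach extension, and non-injectivity by testing against $\nabla\psi$ with $\psi\in C^\infty_0(\Omega)$ non-constant. Your added remarks (that the operator norm is at most one, that $\cPs{q'}$ is in fact a quotient map, and the care taken to ensure $\nabla\psi$ is nonzero as an element of $\Wt^{-1,q'}(\Omega)$) are all valid elaborations of points the paper leaves implicit.
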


\begin{proof} \rm
The boundedness of operator $\cPs{q'}$ directly follows from the
definition of the norms in the spaces $\Wt^{-1,q'}(\Omega)$,
$\Wts^{-1,q'}(\Omega)$ and the definition of $\cPs{q'}$.

Let $\bfg\in\Wts^{-1,q'}(\Omega)$. There exists (by the
Hahn-Banach theorem) an extension of $\bfg$ from
$\Wts^{1,q}(\Omega)$ to $\Wt^{1,q}(\Omega)$, which we denote by
$\widetilde{\bfg}$. The extension is an element of
$\Wt^{-1,q'}(\Omega)$, satisfying
$\|\widetilde{\bfg}\|_{-1,q'}=\|\bfg\|_{-1,q';\, \sigma}$ and
\begin{displaymath}
\langle \widetilde{\bfg},\bfvarphi\rangle_{\tau}\ =\
\langle\bfg,\bfvarphi\rangle_{\tau,\sigma}
\end{displaymath}
for all $\bfvarphi\in\Wts^{1,q}(\Omega)$. This shows that
$\bfg=\cPs{q'}\widetilde{\bfg}$. Consequently, the range of
$\cPs{q'}$ is the whole space $\bfW^{-1,q'}_{0,\sigma}(\Omega)$.

Finally, considering $\bfg=\nabla\psi$ for $\psi\in
C^{\infty}_0(\Omega)$, we get
\begin{displaymath}
\langle \cPs{q'}\bfg,\bfvarphi\rangle_{\tau,\sigma}\ =\ \langle
\bfg,\bfvarphi \rangle_{\tau}\ =\ \int_{\Omega}\nabla
\psi\cdot\bfvarphi\; \rmd\bfx\ =\ 0
\end{displaymath}
for all $\bfvarphi\in\Wts^{1,q}(\Omega)$. This shows that the
operator $\cPs{q'}$ is not one-to-one.
\end{proof}

\vspace{4pt} \noindent
{\bf 2.4. The relation between operator $\cPs{q'}$ and the
Helmholtz projection.} \ If each function
$\bfg\in\bfL^{q'}(\Omega)$ can be uniquely expressed in the form
$\bfg=\bfv+\nabla\psi$ for some $\bfv\in\Ls^{q'}(\Omega)$ and
$\nabla\psi\in\bfG_{q'}(\Omega)$, which is equivalent to the
validity of the decomposition
\begin{equation}
\bfL^{q'}(\Omega)\ =\ \Ls^{q'}(\Omega)\oplus\bfG_{q'}(\Omega),
\label{2.1*}
\end{equation}
then we write $\bfv=\Ps{q'}\bfg$. Decomposition (\ref{2.1*}) is
called the {\it Helmholtz decomposition} and the operator
$\Ps{q'}$ is called the {\it Helmholtz projection.} The existence
of the Helmholtz decomposition depends on exponent $q'$ and the
shape of domain $\Omega$. If $q'=2$ then the Helmholtz
decomposition exists on an arbitrary domain $\Omega$ and $\Ps{2}$,
respectively $I-\Ps{2}$, is an orthogonal projection of
$\bfL^2(\Omega)$ onto $\Ls^2(\Omega)$, respectively onto
$\bfG_2(\Omega)$. (See e.g.~\cite{Ga1}.) If $q'\not=2$ then
various sufficient conditions for the existence of the Helmholtz
decomposition can be found e.g.~in \cite{FaKoSo}, \cite{FuMo},
\cite{Ga1}, \cite{GeShe}, \cite{KoYa} and \cite{SiSo}.

Further on in this paragraph, we assume that the Helmholtz
decomposition of $\bfL^{q'}(\Omega)$ exists. Let
$\bfg\in\bfL^{q'}(\Omega)$. Treating $\bfg$ as an element of
$\Wt^{-1,q'}(\Omega)$ in the sense of paragraph 2.2, we have
$\langle\cPs{q'}\bfg,\bfvarphi\rangle_{\tau,\sigma}= \langle\bfg,
\bfvarphi\rangle_{\tau}$ for all $\bfvarphi\in\Wts^{1,q}(\Omega)$.
Writing $\bfg=\Ps{q'}\bfg+(I-\Ps{q'})\bfg$, we also have
\begin{displaymath}
\langle\bfg, \bfvarphi\rangle_{\tau}\ =\
\blangle\Ps{q'}\bfg+(I-\Ps{q'})\bfg,\bfvarphi\brangle_{\tau}\ =\
\blangle\Ps{q'}\bfg,\bfvarphi\brangle_{\tau}
\end{displaymath}
for all $\bfvarphi\in\Wts^{1,q}(\Omega)$, because
$(I-\Ps{q'})\bfg\in\bfG_{q'}(\Omega)$. Furthermore,
\begin{displaymath}
\blangle\Ps{q'}\bfg,\bfvarphi\brangle_{\tau}\ =\
\blangle\Ps{q'}\bfg,\bfvarphi\brangle_{\tau,\sigma},
\end{displaymath}
because $\Ps{q'}\bfg\in\Ls^{q'}(\Omega)$,
$\bfvarphi\in\Wts^{1,q}(\Omega)$ and the formulas (\ref{2.1}) and
(\ref{2.2}) show that the dualities
$\langle\Ps{q'}\bfg,\bfvarphi\rangle_{\tau}$ and
$\langle\Ps{q'}\bfg,\bfvarphi\rangle_{\tau,\sigma}$ are expressed
by the same integrals. Hence
$\langle\cPs{q'}\bfg,\bfvarphi\rangle_{\tau,\sigma}$ coincides
with $\langle\Ps{q'}\bfg,\bfvarphi\rangle_{\tau,\sigma}$ for all
$\bfvarphi\in\Wts^{1,q}(\Omega)$. Consequently, $\cPs{q'}\bfg$ and
$\Ps{q'}\bfg$ represent the same element of
$\Wts^{-1,q'}(\Omega)$. As $\Ps{q'}\bfg\in\Ls^{q'}(\Omega)$,
$\cPs{q'}\bfg$ can also be considered to be an element of
$\Ls^{q'}(\Omega)$, which induces a functional in
$\Wts^{-1,q}(\Omega)$ in the sense of paragraph 2.2. Thus, {\it
the Helmholtz projection $\Ps{q'}$ coincides with the restriction
of $\cPs{q'}$ to $\bfL^{q'}(\Omega)$.}

\vspace{4pt} \noindent
{\bf 2.5. More on the space $\Wts^{1,q}(\Omega)^{\perp}$.} \
Identifying $\bfG_{q'}(\Omega)$ with a subspace of
$\Wt^{-1,q'}(\Omega)$ in the sense of paragraph 2.2, {\it we
denote by ${}^{\perp}\bfG_{q'}(\Omega)$ the linear space
$\bigl\{\bfvarphi\in\Wt^{1,q}(\Omega)$;
$\forall\br\bfg\in\bfG_{q'}(\Omega):
\langle\bfg,\bfvarphi\rangle_{\tau}=0\bigr\}$.} Using \cite[Lemma
III.2.1]{Ga1}, we deduce that
$\Wts^{1,q}(\Omega)={}^{\perp}\bfG_{q'}(\Omega)$. Hence
$\Wts^{1,q}(\Omega)^{\perp}=(^{\perp}\bfG_{q'}(\Omega))^{\perp}$
and applying Theorem 4.7 in \cite{Ru}, we observe that
$\Wts^{1,q}(\Omega)^{\perp}$ is a closure of $\bfG_{q'}(\Omega)$
in the weak-$*$ topology of $\Wt^{-1,q'}(\Omega)$. The next lemma
tells us more on elements of $\Wts^{1,q}(\Omega)^{\perp}$.

\begin{lemma} \label{L2.2}
Let $\bfF\in\Wts^{1,q}(\Omega)^{\perp}$ and
$\Omega_0\subset\subset\Omega$ be a nonempty sub-domain of
$\Omega$. Then there exists a unique $p\in L^{q'}_{loc}(\Omega)$
such that $p\in L^{q'}(\Omega_R)$ for all $R>0$, \
$\int_{\Omega_0}p\; \rmd\bfx=0$ \ and
\begin{alignat}{5}
& \|p\|_{q';\, \Omega_R}\ &&\leq\ c(R)\, \|\bfF\|_{-1,q} \quad &&
\mbox{for all}\ R>0, \label{2.9*} \\
& \blangle\bfF,\bfpsi\brangle_{\tau}\ &&=\ -\int_{\Omega}p\
\div\bfpsi\; \rmd\bfx \quad && \mbox{for all}\
\bfpsi\in\Wtc^{1,q}(\Omega). \label{2.7*}
\end{alignat}
\end{lemma}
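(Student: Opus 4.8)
The plan is to recover the function $p$ from the functional $\bfF$ by a local, exhaustion-based argument using the classical de Rham / Ne\v{c}as lemma, and then to patch the local pieces into a single element of $L^{q'}_{loc}(\Omega)$ by exploiting the normalization on $\Omega_0$.

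First I would work on the bounded Lipschitz subdomains $\Omega_n$ from condition (i). Fix $n$ large enough that $\Omega_0\subset\subset\Omega_n$. Restricting $\bfF$ to test functions $\bfpsi\in\bfW^{1,q}_0(\Omega_n)\subset\Wtc^{1,q}(\Omega)$ (functions vanishing on all of $\partial\Omega_n$, hence in particular on $\partial\Omega\cap\Omega_n$, so $\bfpsi\cdot\bfn=0$ there and extension by zero lands in $\Wtc^{1,q}(\Omega)$), we get a functional on $\bfW^{1,q}_0(\Omega_n)$. The key point is that it annihilates divergence-free test functions: if $\bfpsi\in\bfW^{1,q}_0(\Omega_n)$ with $\div\bfpsi=0$, then its zero extension lies in $\Wtsc^{1,q}(\Omega)\subset\Wts^{1,q}(\Omega)$, so $\langle\bfF,\bfpsi\rangle_\tau=0$ because $\bfF\in\Wts^{1,q}(\Omega)^\perp$. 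By the Ne\v{c}as/de Rham lemma on the bounded Lipschitz domain $\Omega_n$ (see \cite{Ga1}, the inf-sup / negative-norm estimate for the divergence), there exists a unique $p_n\in L^{q'}(\Omega_n)$ with $\int_{\Omega_n}p_n\,\rmd\bfx=0$ such that $\langle\bfF,\bfpsi\rangle_\tau=-\int_{\Omega_n}p_n\,\div\bfpsi\,\rmd\bfx$ for all $\bfpsi\in\bfW^{1,q}_0(\Omega_n)$, together with the estimate $\|p_n\|_{q';\,\Omega_n}\le c(\Omega_n)\,\|\bfF\|_{-1,q}$ (the norm on the right being computed over the relevant test-function space, dominated by the $\Wt^{-1,q'}(\Omega)$-norm).

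Next I would reconcile the $p_n$ across different $n$. For $m>n$, both $p_m$ and $p_n$ represent $\bfF$ on $\bfW^{1,q}_0(\Omega_n)$, so $p_m-p_n$ is constant on $\Omega_n$ (again de Rham: a function whose gradient annihilates all compactly supported divergence-free fields is locally constant; on the connected set $\Omega_n$ it is a genuine constant). Hence, after subtracting suitable constants, the family is consistent and defines a single $p\in L^{q'}_{loc}(\Omega)$ with $p|_{\Omega_n}\in L^{q'}(\Omega_n)$ for every $n$; fixing the additive constant by $\int_{\Omega_0}p\,\rmd\bfx=0$ pins down $p$ uniquely. Since every $\Omega_R:=\Omega\cap\{|\bfx|<R\}$ is contained in some $\Omega_n$, we get $p\in L^{q'}(\Omega_R)$ for all $R>0$, and the estimate \eqref{2.9*} follows from the de Rham estimate on $\Omega_n\supset\Omega_R$ plus control of the constant via the normalization on $\Omega_0$ (a Poincaré-type argument comparing the $\Omega_n$-mean-zero representative with the $\Omega_0$-mean-zero one). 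Finally, to upgrade \eqref{2.7*} from test functions supported in the interior $\Omega_n$ to general $\bfpsi\in\Wtc^{1,q}(\Omega)$ — i.e.\ to functions with $\supp\bfpsi$ compact in $\R^3$ but possibly touching $\partial\Omega$ and only satisfying $\bfpsi\cdot\bfn=0$ there — I would decompose $\bfpsi=\bfpsi_\sigma+\bfpsi_G$ along the space $\Wts^{1,q}$ and its complement; on the divergence-free part both sides of \eqref{2.7*} vanish ($\bfF$ by the annihilator property, the right side because $\div\bfpsi_\sigma=0$), and on the remaining part one reduces, via a partition of unity subordinate to the exhaustion, to the already-established interior identity.

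The main obstacle I anticipate is precisely this last step: passing from test functions vanishing on the whole boundary $\partial\Omega_n$ to the larger class $\Wtc^{1,q}(\Omega)$ of functions that are merely tangential on $\partial\Omega$. Near $\partial\Omega$ the de Rham lemma must be used in a form adapted to Navier's (tangency) boundary condition rather than the Dirichlet one, which is why the flattening-of-the-boundary construction, the solvability of $\div\bfw=g$ with $\bfw\cdot\bfn=0$ on a boundary patch, and the Lipschitz regularity of $\partial\Omega$ all enter here; the interior patching and the uniqueness/normalization bookkeeping are comparatively routine.
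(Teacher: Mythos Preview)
Your interior/exhaustion argument (restrict $\bfF$ to $\bfW^{1,q}_0(\Omega_n)$, apply de Rham/Ne\v{c}as to get $p_n$, patch by matching constants, normalize over $\Omega_0$) is exactly what the paper does, and the estimate \eqref{2.9*} follows as you say.

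The place where you diverge from the paper --- and where your proposal becomes vague --- is the last step, passing from $\bfW^{1,q}_0(\Omega_n)$ to all of $\Wtc^{1,q}(\Omega)$. You anticipate needing a de Rham lemma adapted to the tangency condition, boundary flattening, and a partition of unity. None of this is needed. The paper's device is simply the Bogovskij operator: given $\bfpsi\in\Wtc^{1,q}(\Omega)$ with $\supp\bfpsi\subset\overline{\Omega_n}$, the restriction $\div\bfpsi$ lies in $L^q_{\rm mv=0}(\Omega_n)$ (mean zero because $\bfpsi\cdot\bfn=0$ on $\partial\Omega$), so one sets $\bfpsi_G:=\gB(\div\bfpsi)\in\bfW^{1,q}_0(\Omega_n)$ and $\bfw:=\bfpsi-\gB(\div\bfpsi)$. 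Then $\bfw$ is divergence-free, supported in $\overline{\Omega_n}$, and inherits the tangency condition from $\bfpsi$ (since $\gB(\div\bfpsi)$ vanishes on $\partial\Omega_n\supset\partial\Omega\cap\overline{\Omega_n}$), so $\bfw\in\Wts^{1,q}(\Omega)$ and $\langle\bfF,\bfw\rangle_\tau=0$. Thus
\[
\blangle\bfF,\bfpsi\brangle_\tau=\blangle\bfF,\gB(\div\bfpsi)\brangle_\tau
=-\int_{\Omega_n}p_n\,\div\gB(\div\bfpsi)\;\rmd\bfx
=-\int_{\Omega_n}p_n\,\div\bfpsi\;\rmd\bfx,
\]
which is \eqref{2.7*}. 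The point is that you solve $\div=g$ with \emph{full} Dirichlet data on $\partial\Omega_n$ (the standard Bogovskij problem), not with a tangency condition; the tangential test function is reduced to a compactly supported one plus a solenoidal remainder that $\bfF$ kills automatically. Your proposed decomposition $\bfpsi=\bfpsi_\sigma+\bfpsi_G$ is morally this, but as written (``complement of $\Wts^{1,q}$'', ``partition of unity'') it does not make clear that $\bfpsi_G$ can be taken in $\bfW^{1,q}_0(\Omega_n)$, which is precisely what closes the argument.
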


\begin{proof}
Let $\{\Omega_n\}$ be the sequence of domains from condition (i).
We can assume without the loss of generality that
$\Omega_0\subseteq\Omega_1$. Let $n\in\N$. Denote by $L^q_{\rm mv=
0}(\Omega_n)$ the space of all functions from $L^q(\Omega_n)$,
whose mean value in $\Omega_n$ is zero. There exists a bounded
linear operator $\gB: L^q_{\rm mv=0}
(\Omega_n)\to\bfW^{1,q}_0(\Omega_n)$, such that
\begin{displaymath}
\div\gB(g)\ =\ g
\end{displaymath}
for all $g\in L^q_{\rm mv=0}(\Omega_n)$. Operator $\gB$ is often
called the {\it Bogovskij} or {\it Bogovskij--Pileckas} operator.
More information on operator $\gB$, including its construction,
can be found e.g.~in \cite[Sec.~III.3]{Ga1} or in \cite{BoSo}.

Denote by $\Wt^{1,q}(\Omega)_n$, respectively
$\Wts^{1,q}(\Omega)_n$, the space of all functions from
$\Wt^{1,q}(\Omega)$, respectively from $\Wts^{1,q}(\Omega)$, that
have a support in $\overline{\Omega_n}$. Let
$\bfpsi\in\Wt^{1,q}(\Omega)_n$. Then the restriction of
$\div\bfpsi$ to $\Omega_n$ (which we again denote by $\div\bfpsi$
in order to keep a simple notation) belongs to $L^q_{\rm
mv=0}(\Omega_n)$ and $\gB(\div\bfpsi_n)\in\bfW^{1,q}_0(\Omega_n)$.
Identifying $\gB(\div\bfpsi)$ with a function from
$\bfW^{1,q}_0(\Omega)$ that equals zero in
$\Omega\smallsetminus\Omega_n$, we have
\begin{displaymath}
\bfpsi\ =\ \gB(\div\bfpsi)+\bfw,
\end{displaymath}
where $\bfw$ is an element of $\Wts^{1,q}(\Omega)$, satisfying
$\bfw=\bfpsi=\bfzero$ in $\Omega\smallsetminus\Omega_n$. Hence
\begin{equation}
\blangle\bfF,\bfpsi\brangle_{\tau}\ =\ \blangle\bfF,\gB(\div\bfpsi
\brangle_{\tau}. \label{2.2*}
\end{equation}
As $\bfF$ is a bounded linear functional on $\Wt^{1,q}(\Omega)$,
vanishing on the subspace $\Wts^{1,q}(\Omega)$, its restriction to
$\Wt^{1,q}(\Omega)_n$ is an element of $\Wt^{-1,q'}(\Omega)_n$,
vanishing on $\Wts^{1,q}(\Omega)_n$. Furthermore, identifying
functions from $\Wt^{1,q}(\Omega)_n$ with their restrictions to
$\Omega_n$, we can also consider $\bfF$ to be an element of
$\bfW^{-1,q'}_0(\Omega_n)$, vanishing on
$\bfW^{1,q}_{0,\sigma}(\Omega_n)$. Thus, due to Lemma 1.4 in
\cite{Ne2}, there exists $c(n)>0$ and a unique function $p_n\in
L^{q'}(\Omega_n)$ such that $\int_{\Omega_0}p_n\; \rmd\bfx=0$ and
\begin{align}
\|p_n\|_{q';\, \Omega_n}\ &\leq\ c(n)\, \|\bfF\|_{-1,q;\,
\Omega_n}\ \leq\ c(n)\, \|\bfF\|_{-1,q}, \label{2.8*} \\
\blangle\bfF,\bfzeta\brangle_{\Omega_n}\ &=\ -\int_{\Omega_n}p_n\
\div\bfzeta\; \rmd\bfx \label{2.3*}
\end{align}
for all $\bfzeta\in\bfW^{1,q}_0(\Omega_n)$. Using identity
(\ref{2.3*}) with $\bfzeta=\gB(\div\bfpsi)$, we obtain
\begin{displaymath}
\blangle\bfF,\gB(\div\bfpsi)\brangle_{\tau}\ \equiv\
\blangle\bfF,\gB(\div\bfpsi)\brangle_{\Omega_n}\ =\
-\int_{\Omega_n}\! p_n\ \div\gB(\div\bfpsi)\; \rmd\bfx\ =\
-\int_{\Omega_n}\! p_n\ \div\bfpsi\; \rmd\bfx.
\end{displaymath}
As the same identities also hold for $n+1$ instead of $n$, we
deduce that $p_{n+1}=p_n$ in $\Omega_n$. Hence we may define
function $p$ in $\Omega$ by the formula $p:=p_n$ in $\Omega_n$ and
we have
\begin{equation}
\blangle\bfF,\gB(\div\bfpsi)\brangle_{\tau}\ =\ -\int_{\Omega} p\
\div\bfpsi\; \rmd\bfx. \label{2.4*}
\end{equation}
If $\bfpsi\in\Wtc^{1,q}(\Omega)$ then
$\bfpsi\in\Wt^{1,q}(\Omega)_n$ for sufficiently large $n$ and
(\ref{2.4*}) holds as well. Inequality (\ref{2.9*}) now follows
from (\ref{2.8*}). Identities (\ref{2.2*}) and (\ref{2.4*}) imply
(\ref{2.7*}).
\end{proof}

\vspace{4pt}
Note that if $\Omega$ is a bounded Lipschitz domain then the
choice $\Omega_0=\Omega$ is also possible in Lemma \ref{L2.2}.


\section{Three equivalent weak formulations of the Navier--Stokes
initial-boundary value problem (\ref{1.1})--(\ref{1.4})}
\label{S3}

Recall that $\Omega$ is supposed to be a locally Lipschitz domain
in $\R^3$.

\vspace{4pt} \noindent
{\bf 3.1. The 1st weak formulation of the Navier--Stokes IBVP
(\ref{1.1})--(\ref{1.4}).} \ {\it Given $\bfu_0\in\Ls^2(\Omega)$
and $\bff\in L^2(0,T$; $\Wt^{-1,2}(\Omega))$. A function $\,
\bfu\in L^{\infty}(0,T;\ \Ls^2(\Omega)) \cap L^2(0,T;\
\Wts^{1,2}(\Omega))$ is said to be a weak solution to the problem
(\ref{1.1})--(\ref{1.4}) if the trace of $\bfu$ on $\Gamma_T$ is
in $L^2(0,T$; $\bfL^2(\partial\Omega))$ and $\bfu$ satisfies
\begin{align}
\int_0^T & \int_{\Omega}\bigl[-\partial_t\bfphi\cdot\bfu+
\bfu\cdot\nabla\bfu\cdot\bfphi+2\nu\br(\nabla\bfu)_s:
(\nabla\bfphi)_s\bigr]\, \rmd\bfx\, \rmd t \nonumber \\
& +\int_0^T\int_{\partial\Omega}\gamma\br\bfu\cdot\bfphi\; \rmd
S\, \rmd t\, =\, \int_0^T\blangle\bff,\bfphi\brangle_{\tau}\; \rmd
t+\int_{\Omega}\bfu_0\cdot\bfphi(.\, ,0)\, \rmd\bfx
\label{3.1}
\end{align}
for all vector--functions $\bfphi\in C^{\infty}_0\bigl([0,T);\;
\Wtsc^{1,2}(\Omega)\bigr)$.}

\vspace{4pt}
Equation (\ref{3.1}) follows from (\ref{1.1}), (\ref{1.2}) if one
formally multiplies equation (\ref{1.1}) by the test function
$\bfphi\in C^{\infty}_0\bigl([0,T);\; \Wtsc^{1,2} (\Omega)\bigr)$,
applies the integration by parts and uses the boundary conditions
(\ref{1.3}) and the initial condition (\ref{1.4}). As the integral
of $\nabla p\cdot\bfphi$ vanishes, the pressure $p$ does not
explicitly appear in (\ref{3.1}).

On the other hand, if $\bff\in\bfL^2(Q_T)$ and $\bfu$ is a weak
solution with the additional properties
$\partial_t\bfu\in\bfL^2(Q_T)$ and $\bfu\in L^2(0,T;\,
\bfW^{2,2}(\Omega))$ then, considering the test functions $\bfphi$
in (\ref{3.1}) of the form $\bfphi(\bfx,t)=\bfvarphi(\bfx)\,
\vartheta(t)$ where $\bfvarphi\in\Wtsc^{1,2}(\Omega)$ and
$\vartheta\in C^{\infty}_0((0,T))$, and applying the backward
integration by parts, one obtains the equation
\begin{displaymath}
\int_{\Omega} \bigl( \partial_t\bfu+\bfu\cdot\nabla\bfu-
\nu\Delta\bfu-\bff \bigr)\cdot\bfvarphi\; \rmd\bfx\ =\ 0
\end{displaymath}
for a.a.~$t\in(0,T)$. As $\Wtsc^{1,2}(\Omega)$ is dense in
$\Ls^2(\Omega)$, this equation shows that $\Ps{2}[\partial_t\bfu
+\bfu\cdot\nabla\bfu-\nu\Delta\bfu- \bff]=\bfzero$ at a.a.~time
instants $t\in(0,T)$. Consequently, to a.a.~$t\in(0,T)$, there
exists $p\in W^{1,2}_{\rm loc}(\Omega)$ such that $\nabla
p=(I-\Ps{2})[\partial_t\bfu+ \bfu\cdot\nabla\bfu-\nu\Delta\bfu-
\bff]$ and the functions $\bfu$ and $p$ satisfy equation
(\ref{1.1}) (as an equation in $\bfL^2(\Omega)$) at a.a.~time
instants $t\in(0,T)$. It follows from the boundedness of
projection $\Ps{2}$ in $\bfL^2(\Omega)$ and the assumed properties
of functions $\bfu$ and $\bff$ that $\nabla p\in \bfL^2(Q_T)$.
Considering afterwards the test functions $\bfphi$ as in
(\ref{3.1}), and integrating by parts in (\ref{3.1}), we get
\begin{displaymath}
\int_0^T\int_{\Omega}\bigl( \partial_t\bfu+\bfu\cdot\nabla\bfu-
\nu\Delta\bfu-\bff \bigr)\cdot\bfphi\; \rmd\bfx+
\int_0^T\int_{\partial\Omega} \bigl( [\bbTd(\bfu)\cdot\bfn]+
\gamma\bfu \bigr)\cdot\bfphi\; \rmd S\, \rmd t\ =\ 0
\end{displaymath}
The first integral is equal to zero, because the expression in the
parentheses equals $-\nabla p$ a.e.~in $Q_T$ and the integral
$\nabla p\cdot\bfphi$ in $\Omega$ equals zero for
a.a.~$t\in(0,T)$. In the second integral, since both $\bfu(\, .\,
,t)$ and  $\bfphi(\, .\, ,t)$ are tangent on $\partial\Omega$, we
can replace $[\bbTd(\bfu)\cdot\bfn]+\gamma\bfu$ by
$[\bbTd(\bfu)\cdot\bfn]_{\tau}+\gamma\bfu$ and we thus obtain
\begin{displaymath}
\int_0^T\int_{\partial\Omega} \bigl(
[\bbTd(\bfu)\cdot\bfn]_{\tau}+\gamma\bfu \bigr)\cdot\bfphi\; \rmd
S\, \rmd t\ =\ 0.
\end{displaymath}
As this equation holds for all test functions $\bfphi\in
C^{\infty}_0\bigl([0,T);\; \Wtsc^{1,2}(\Omega)\bigr)$, we deduce
that $\bfu$ satisfies the boundary condition (\ref{1.3}b). Recall
that this procedure works only under additional assumptions on
smoothness of the weak solution $\bfu$ and function $\bff$. On a
general level, however, it is not known whether the existing weak
solution is smooth. Nevertheless, we show in subsection 4.4 that
there exists a certain pressure, which can be naturally associated
with the weak solution to (\ref{1.1})--(\ref{1.4}). The pressure
generally exists only as a distribution, see Theorem \ref{T4.2}.

\vspace{6pt} \noindent
{\bf 3.2. The 2nd weak formulation of the Navier-Stokes IBVP
(\ref{1.1})--(\ref{1.4}).} \ We define the operators
$\cA:\Wt^{1,2}(\Omega)\to\Wt^{-1,2}(\Omega)$ and
$\cB:\bigl[\Wt^{1,2}(\Omega)\bigr]^2\to \Wt^{-1,2}(\Omega)$ by the
equations
\begin{align*}
& \blangle\cA\bfv,\bfvarphi\brangle_{\tau}\ :=\ \int_{\Omega}
2\nu\br(\nabla\bfv)_s:(\nabla\bfvarphi)_s\;
\rmd\bfx+\int_{\partial\Omega}\gamma\bfv\cdot\bfvarphi\; \rmd S &&
\mbox{for}\ \bfv,\bfvarphi\in\Wt^{1,2}(\Omega), \\
& \blangle\cB(\bfv,\bfw),\bfvarphi\brangle_{\tau}\ :=\
\int_{\Omega} \bfv\cdot\nabla\bfw\cdot\bfvarphi\; \rmd\bfx &&
\mbox{for}\ \bfv,\bfw,\bfvarphi\in\Wt^{1,2}(\Omega).
\end{align*}
By Korn's inequality (see e.g.~\cite[Lemma 4]{SoSc}) and
inequality \cite[(II.4.5), p.~63]{Ga1}, we have $\cn01$
\begin{equation}
\blangle\cA\bfv,\bfv\brangle_{\tau}\ =\ \int_{\Omega}\nu\,
|(\nabla\bfv)_s|^2\; \rmd\bfx+ \int_{\partial\Omega}\gamma\,
|\bfv|^2\; \rmd S\ \geq\ \cc01\br \nu\, \|\nabla\bfv\|_2^2.
\label{3.2}
\end{equation}
Furthermore, using the boundedness of the operator of traces from
$\Wt^{1,2}(\Omega)$ to $\bfL^2(\partial\Omega)$, we can also
deduce that there exists $\cn02\cc02>0$ such that
\begin{equation}
\|\cA\bfv\|_{-1,2}\ \leq\ \cc02\, \|\nabla\bfv\|_2
\label{3.3}
\end{equation}
for all $\bfv\in\Wt^{1,2}(\Omega)$. Thus, $\cA$ is a bounded
one--to--one operator, mapping $\Wt^{1,2}(\Omega)$ into
$\Wt^{-1,2}(\Omega)$. If $k>0$ then the range of $\cA+kI$ is the
whole space $\Wt^{-1,2}(\Omega)$ (by the Lax--Milgram theorem) and
$(\cA+kI)^{-1}$ is a bounded operator from $\Wt^{-1,2}(\Omega)$
onto $\Wt^{1,2}(\Omega)$. If $\Omega$ is bounded then the same
statements also hold for $k=0$. The bilinear operator $\cB$
satisfies
\begin{align}
& \|\cB(\bfv,\bfw)\|_{-1,2}\ =\ \sup_{\boldsymbol{\varphi}
\in\Wt^{1,2}(\Omega),\ \boldsymbol{\varphi}\not=\bfzero}
\frac{|\br\langle \cB(\bfv,\bfw),\bfvarphi\rangle_{\tau}\br|}
{\|\bfvarphi\|_{1,2}} \nonumber \\
& \hspace{6pt} =\ \sup_{\boldsymbol{\varphi}\in
\Wt^{1,2}(\Omega),\ \boldsymbol{\varphi}\not=\bfzero}
\frac{|(\bfv\cdot\nabla\bfw,\,\bfvarphi)_2|}
{\|\bfvarphi\|_{1,2}}\ \leq \sup_{\boldsymbol{\varphi}\in
\Wt^{1,2}(\Omega),\ \boldsymbol{\varphi}\not=\bfzero}
\frac{\|\bfv\|_2^{1/2}\, \|\bfv\|_6^{1/2}\, \|\nabla\bfw\|_2\,
\|\bfvarphi\|_6}{\|\bfvarphi\|_{1,2}} \nonumber \\
\noalign{\vskip 4pt}
& \hspace{6pt} \leq\ c\, \|\bfv\|_2^{1/2}\,
\|\nabla\bfv\|_2^{1/2}\, \|\nabla\bfw\|_2. \label{3.4}
\end{align}
(We have used the imbedding inequality $\|\bfv\|_6\leq c\,
\|\bfv\|_{1,2}$. Here and further on, $c$ denotes the generic
constant.)

Let $\bfu$ be a weak solution of the IBVP (\ref{1.1})--(\ref{1.4})
in the sense of paragraph 3.1. It follows from the estimates
(\ref{3.3}) and (\ref{3.4}) that
\begin{equation}
\cA\bfu\in L^2(0,T;\, \Wt^{-1,2}(\Omega)) \quad \mbox{and} \quad
\cB(\bfu,\bfu)\in L^{4/3}(0,T;\, \Wt^{-1,2}(\Omega)).
\label{3.5}
\end{equation}
Considering $\bfphi$ in (\ref{3.1}) in the form
$\bfphi(\bfx,t)=\bfvarphi(\bfx)\, \vartheta(t)$, where
$\bfvarphi\in\Wtsc^{1,2}(\Omega)$ and $\vartheta\in
C^{\infty}_0((0,T))$, we deduce that $\bfu$ satisfies the equation
\begin{equation}
\frac{\rmd}{\rmd\br t}\, (\bfu,\bfvarphi)_2+\blangle
\cA\bfu,\bfvarphi \brangle_{\tau}+\blangle \cB(\bfu,\bfu),
\bfvarphi\brangle_{\tau}\ =\ \langle \bff,\bfvarphi \rangle_{\tau}
\label{3.6}
\end{equation}
a.e.~in\ $(0,T)$, where the derivative of $(\bfu,\bfvarphi)_2$
means the derivative in the sense of distributions. As the space
$\Wtsc^{1,2}(\Omega)$ is dense in $\Wts^{1,2}(\Omega)$,
(\ref{3.6}) holds for all $\bfvarphi\in\Wts^{1,2}(\Omega)$. It
follows from (\ref{3.5}) that $\langle\cA\bfu,\bfvarphi
\rangle_{\tau} \in L^2(0,T)$ and $\langle\cB(\bfu,\bfu),\bfvarphi
\rangle_{\tau} \in L^{4/3}(0,T)$. Since
$\langle\bff,\bfvarphi\rangle_{\tau}\in L^2(0,T)$, we obtain from
(\ref{3.6}) that the distributional derivative of
$(\bfu,\bfvarphi)_2$ with respect to $t$ is in $L^{4/3}(0,T)$.
Hence $(\bfu,\bfvarphi)_2$ is a.e.~in $[0,T)$ equal to a
continuous function and the weak solution $\bfu$ is (after a
possible redefinition on a set of measure zero) a weakly
continuous function from $[0,T)$ to $\Ls^2(\Omega)$. Now, one can
easily deduce from (\ref{3.1}) that $\bfu$ satisfies the initial
condition (\ref{1.4}) in the sense that
\begin{equation}
(\bfu,\bfvarphi)_2\br\bigl|_{t=0}\ =\ (\bfu_0,\bfvarphi)_2
\label{3.7}
\end{equation}
for all $\bfvarphi\in\Wts^{1,2}(\Omega)$. Thus, we come to the 2nd
weak formulation of the IBVP (\ref{1.1})--(\ref{1.4}):

\vspace{4pt}
{\it Given $\bfu_0\in\Ls^2(\Omega)$ and $\bff\in L^2(0,T;\
\Wt^{-1,2}(\Omega))$. Find $\bfu\in L^{\infty}(0,T;\
\Ls^2(\Omega))\cap L^2(0,T$; $\Wts^{1,2}(\Omega))$ (called the
weak solution) such that $\bfu$ satisfies equation (\ref{3.6})
a.e.~in $(0,T)$ and the initial condition (\ref{3.7}) for all
$\bfvarphi\in\Wts^{1,2}(\Omega)$.}

\vspace{4pt}
We have shown that if $\bfu$ is a weak solution of the IBVP
(\ref{1.1})--(\ref{1.4}) in the sense of the 1st definition (see
paragraph 3.1) then it also satisfies the 2nd definition. Applying
standard arguments, one can also show the opposite, i.e.~if $\bfu$
satisfies the 2nd definition then it also satisfies the 1st
definition.

\vspace{5pt} \noindent
{\bf 3.3. The 3rd weak formulation of the Navier-Stokes IBVP
(\ref{1.1})--(\ref{1.4}).} \ Equation (\ref{3.6}) can also be
written in the equivalent form
\begin{equation}
\frac{\rmd}{\rmd\br t}\, (\bfu,\bfvarphi)_2+\blangle
\cPs{2}\br\cA\bfu,\bfvarphi \brangle_{\tau,\sigma}+
\blangle\cPs{2}\br\cB(\bfu,\bfu),\bfvarphi\brangle_{\Omega,
\sigma}\ =\ \blangle \cPs{2}\br\bff,
\bfvarphi\brangle_{\tau,\sigma}. \label{3.8}
\end{equation}
Let us denote by $(\bfu')_{\sigma}$ the distributional derivative
with respect to $t$ of $\bfu$, as a function from $(0,T)$ to
$\Wts^{-1,2}(\Omega)$. (We explain later why we use the notation
$(\bfu')_{\sigma}$ and not just $\bfu'$.) Equation (\ref{3.8}) can
also be written in the form
\begin{equation}
(\bfu')_{\sigma}+\cPs{2}\br\cA\bfu+\cPs{2}\br\cB(\bfu,\bfu)\ =\
\cPs{2}\br\bff, \label{3.9}
\end{equation}
which is an equation in $\Wts^{-1,2}(\Omega)$, satisfied a.e.~in
the time interval $(0,T)$. (This can be deduced by means of Lemma
III.1.1 in \cite{Te}.) Due to (\ref{3.5}) and (\ref{3.6}),
$(\bfu')_{\sigma}\in L^{4/3}(0,T;\, \Wts^{-1,2}(\Omega))$. Hence
$\bfu$ coincides a.e.~in $(0,T)$ with a continuous function from
$[0,T)$ to $\Wts^{-1,2}(\Omega)$ and it is therefore meaningful to
prescribe an initial condition for $\bfu$ at time $t=0$. Thus, we
obtain the 3rd equivalent definition of a weak solution to the
IBVP (\ref{1.1})--(\ref{1.4}):

\vspace{4pt}
{\it Given $\bfu_0\in\Ls^2(\Omega)$ and $\bff\in L^2(0,T$;
$\Wt^{-1,2}(\Omega))$. Function $\bfu\in L^{\infty}(0,T$;
$\Ls^2(\Omega))\cap L^2(0,T$; $\Wts^{1,2}(\Omega))$ is called a
weak solution to the IBVP (\ref{1.1})--(\ref{1.4}) if $\bfu$
satisfies equation (\ref{3.9}) a.e.~in the interval $(0,T)$ and
the initial condition (\ref{1.4}).}

\vspace{4pt}
We have explained that if $\bfu$ is a weak solution in the sense
of the 2nd definition then it satisfies the 3rd definition. The
validity of the opposite implication can be again verified by
means of Lemma III.1.1 in \cite{Te}.

\vspace{4pt} \noindent
{\bf 3.4. Remark.} \ \rm Recall that $(\bfu')_{\sigma}$ is the
distributional derivative with respect to $t$ of $\bfu$, as a
function from $(0,T)$ to $\Wts^{-1,2}(\Omega)$. It is not the same
as the distributional derivative with respect to $t$ of $\bfu$, as
a function from $(0,T)$ to $\Wt^{-1,2}(\Omega)$, which can be
naturally denoted by $\bfu'$. As it is important to distinguish
between these two derivatives, we use the different notation. We
can formally write $(\bfu')_{\sigma}=\cPs{2}\bfu'$.

Since $(\bfu')_{\sigma}\in L^{4/3}(0,T;\, \Wts^{-1,2}(\Omega))$,
$\bfu$ coincides a.e.~in $(0,T)$ with a continuous function from
$[0,T)$ to $\Wts^{-1,2}(\Omega)$. According to what is said in the
first part of this remark, this, however, does not imply that
$\bfu$ coincides a.e.~in $(0,T)$ with a continuous function from
$[0,T)$ to $\Wt^{-1,2}(\Omega)$.


\section{An associated pressure, its uniqueness and existence}
\label{S4}

{\bf 4.1. An associated pressure.} \ {\it Let $\bfu$ be a weak
solution to the IBVP (\ref{1.1})--(\ref{1.4}). A distribution $p$
in $Q_T$ is called an associated pressure if the pair $(\bfu,p)$
satisfies the equations (\ref{1.1}), (\ref{1.2}) in the sense of
distributions in $Q_T$.}

\vspace{4pt} \noindent
{\bf 4.2. On uniqueness of the associated pressure.} \ Let $\bfu$
be a weak solution to the IBVP (\ref{1.1})--(\ref{1.4}) and $p$ be
an associated pressure.

If $G$ is a distribution in $(0,T)$ and $\psi\in
C_0^{\infty}(Q_T)$ then we define a distribution $g$ in $Q_T$ by
the formula
\begin{equation}
\bllangle g,\psi\brrangle_{Q_T}\ :=\ \Blangle G,\
\int_{\Omega}\psi\; \rmd\bfx \Brangle_{(0,T)}, \label{4.11}
\end{equation}
where $\llangle\, .\, ,\, .\, \rrangle_{Q_T}$, respectively
$\langle\, .\, ,\, .\, \rangle_{(0,T)}$, denotes the action of a
distribution in $Q_T$ on a function from $C^{\infty}_0(Q_T)$ or
$\bfC^{\infty}_0(Q_T)$, respectively the action of a distribution
in $(0,T)$ on a function from $C_0^{\infty}((0,T))$. Obviously, if
$\bfphi\in C^{\infty}_0\bigl((0,T);\, \Wtc^{1,2}(\Omega)\bigr)$
then
\begin{equation}
\bllangle\nabla g,\bfphi\brrangle_{Q_T}\ =\ -\bllangle
g,\div\bfphi\brrangle_{Q_T}\ =\ -\Blangle G,\
\int_{\Omega}\div\bfphi\; \rmd\bfx\Brangle_{(0,T)}\ =\ 0,
\label{4.12}
\end{equation}
because $\int_{\Omega}\div\bfphi(\, .\, ,t)\; \rmd\bfx=0$ for all
$t\in(0,T)$. Thus, $p+g$ is a pressure, associated with the weak
solution $\bfu$ to the IBVP (\ref{1.1})--(\ref{1.4}), too.

For $h\in C_0^{\infty}((0,T))$, define
\begin{equation}
\blangle G,h \brangle_{(0,T)}\ :=\ \bllangle
g,\psi\brrangle_{Q_T}, \label{4.13}
\end{equation}
where $\psi\in C_0^{\infty}(Q_T)$ is chosen so that
$h(t)=\int_{\Omega}\psi(\bfx,t)\; \rmd\bfx$ for all $t\in(0,T)$.
The definition of the distribution $G$ is independent of the
concrete choice of function $\psi$ due to these reasons: let
$\psi_1$ and $\psi_2$ be two functions from $C_0^{\infty}(Q_T)$
such that $h(t)=\int_{\Omega}\psi_1(\bfx,t)\;
\rmd\bfx=\int_{\Omega}\psi_2(\bfx,t)\; \rmd\bfx$ for $t\in(0,T)$.
Denote by $G_1$, respectively $G_2$, the distribution, defined by
formula (\ref{4.13}) with $\psi=\psi_1$, respectively
$\psi=\psi_2$. Since $\supp(\psi_1-\psi_2)$ is a compact subset of
$Q_T$ and $\int_{\Omega}[\psi_1(\, .\, ,t)-\psi_2(\, .\, ,t)]\;
\rmd\bfx=0$ for all $t\in(0,T)$, there exists a function
$\bfphi\in\bfC_0^{\infty}(Q_T)$ such that $\div\bfphi=
\psi_1-\psi_2$ in $Q_T$. (See e.g.~\cite[Sec.~III.3]{Ga1} or
\cite{BoSo} for the construction of function $\bfphi$.) Then
\begin{displaymath}
\blangle G_1-G_2,h \brangle_{(0,T)}\ :=\ \bllangle
g,\psi_1-\psi_2\brrangle_{Q_T}\ =\ \bllangle
g,\div\bfphi\brrangle_{Q_T},
\end{displaymath}
which is equal to zero due to (\ref{4.12}). Formula (\ref{4.13})
and the identity $h(t)=\int_{\Omega}\psi(\bfx,t)\; \rmd\bfx$ show
that the distribution $g$ has the form (\ref{4.11}).

We have proven the theorem:

\begin{theorem} \label{T4.1}
The pressure, associated with a weak solution to the IBVP
(\ref{1.1})--(\ref{1.4}), is unique up to an additive distribution
of the form (\ref{4.11}).
\end{theorem}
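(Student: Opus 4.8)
The plan is to show two things: first, that any distribution of the form \eqref{4.11} can be added to an associated pressure without destroying the property of being an associated pressure; second, that conversely, if $p_1$ and $p_2$ are two pressures associated with the same weak solution $\bfu$, then their difference $g := p_1 - p_2$ necessarily has the form \eqref{4.11}. The first direction is already essentially carried out in the discussion preceding the theorem (computation \eqref{4.12} shows $\nabla g = \bfzero$ as a distribution acting on $\bfphi \in C^\infty_0((0,T);\Wtc^{1,2}(\Omega))$, hence a fortiori on $\bfphi \in \bfC^\infty_0(Q_T)$), so in the proof I would simply recall it in one sentence.

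For the converse, the starting point is that since both $(\bfu,p_1)$ and $(\bfu,p_2)$ satisfy \eqref{1.1}--\eqref{1.2} in the sense of distributions in $Q_T$, subtracting the two copies of \eqref{1.1} gives $\nabla g = \bfzero$ in the sense of distributions in $Q_T$, i.e.
\begin{displaymath}
\bllangle g, \div\bfphi \brrangle_{Q_T}\ =\ 0 \qquad \text{for all}\ \bfphi\in\bfC^\infty_0(Q_T).
\end{displaymath}
The task is then to deduce from this that $g$ acts on a test function $\psi \in C^\infty_0(Q_T)$ only through the quantity $h(t) := \int_\Omega \psi(\bfx,t)\,\rmd\bfx \in C^\infty_0((0,T))$. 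I would define $G$ by formula \eqref{4.13}; the key point — for which the excerpt has already supplied the argument — is that this is well defined, i.e.\ independent of the choice of $\psi$ with prescribed $h$, precisely because the difference of two such choices is of the form $\div\bfphi$ with $\bfphi \in \bfC^\infty_0(Q_T)$ (using the Bogovskij operator on a suitable bounded Lipschitz subdomain containing the supports), and $g$ annihilates such divergences. Linearity and continuity of $G$ as a distribution on $(0,T)$ follow from the corresponding properties of $g$ together with the fact that the map $\psi \mapsto h$ admits a continuous linear right inverse on appropriate spaces of test functions (again via Bogovskij). Then by construction $\bllangle g,\psi\brrangle_{Q_T} = \langle G, \int_\Omega \psi\,\rmd\bfx\rangle_{(0,T)}$ for every $\psi$, which is exactly \eqref{4.11}.

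The main obstacle is the well-definedness and continuity of $G$: one must be careful that for a test function $\psi \in C^\infty_0(Q_T)$ with $\int_\Omega \psi(\bfx,t)\,\rmd\bfx = h(t)$, the corrector $\bfphi$ with $\div\bfphi = \psi$ can be chosen in $\bfC^\infty_0(Q_T)$ with support controlled by the support of $\psi$, and that the assignment is linear and continuous in the relevant topologies so that $G$ is genuinely a distribution. This is handled by applying the Bogovskij operator $\gB$ (as in Lemma \ref{L2.2} and the paragraph after \eqref{4.13}) on a fixed bounded Lipschitz domain $\Omega_n \supset\supset \pi_\Omega(\supp\psi)$, noting that $\div\psi(\cdot,t)$ has zero mean on $\Omega_n$ for each $t$ because $\Omega$ is a domain and $\psi(\cdot,t)$ is compactly supported, and observing that $\gB$ acts smoothly in the parameter $t$. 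Everything else is bookkeeping: combining the two directions gives that the set of associated pressures is exactly a single coset of the subspace $\{g : g \text{ of the form } \eqref{4.11}\}$, which is the assertion of the theorem.
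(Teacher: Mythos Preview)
Your proposal is correct and follows essentially the same two-direction argument as the paper's own proof, which is the discussion in subsection~4.2 preceding the theorem statement: show that adding a distribution of the form~\eqref{4.11} preserves the associated-pressure property via~\eqref{4.12}, and conversely, for $g=p_1-p_2$ define $G$ by~\eqref{4.13} and verify well-definedness using the Bogovskij operator. One small slip: you write ``$\div\psi(\cdot,t)$ has zero mean'' for the scalar $\psi$, but you presumably mean that the \emph{difference} $\psi_1-\psi_2$ of two candidate lifts has zero spatial mean (the Bogovskij hypothesis); otherwise you are if anything more careful than the paper, which does not explicitly address the continuity of $G$ --- a point most cleanly handled by fixing $\chi\in C^\infty_0(\Omega)$ with $\int_\Omega\chi\,\rmd\bfx=1$ and taking $\psi=\chi h$.
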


\vspace{0pt} \noindent
{\bf 4.3. Projections $E^{1,2}_{\tau}$ and $E^{-1,2}_{\tau}$.} \
In this subsection, we introduce orthogonal projections
$E^{1,2}_{\tau}$ and $E^{-1,2}_{\tau}$ in $\Wt^{1,2}(\Omega)$ and
$\Wt^{-1,2}(\Omega)$, respectively, which further play an
important role in the proof of the existence of an associated
pressure.

$\Wt^{1,2}(\Omega)$ is a Hilbert space with the scalar product
$(\, .\, ,\, .\, )_{1,2}=\blangle(\cA_0+I)\, .\, ,\, .\
\brangle_{\tau}$, where $\cA_0$ is the operator $\cA$ from
paragraph 3.2, corresponding to $\nu=1$ and $\gamma=0$. Similarly,
$\Wt^{-1,2}(\Omega)$ is a Hilbert space with the scalar product
\begin{equation}
(\bfg,\bfh)_{-1,2}\ :=\ \blangle\bfg,(\cA_0+
I)^{-1}\bfh\brangle_{\tau}\ =\ \bigl((\cA_0+
I)^{-1}\bfg,(\cA_0+I)^{-1}\bfh\bigr)_{1,2}. \label{4.1}
\end{equation}
Denote by $E^{1,2}_{\tau}$ the orthogonal projection in
$\Wt^{1,2}(\Omega)$ that vanishes just on $\Wts^{1,2}(\Omega)$,
which means that
\begin{equation}
\ker E^{1,2}_{\tau}\ =\ \Wts^{1,2}(\Omega). \label{4.2}
\end{equation}
Denote by $E^{-1,2}_{\tau}$ the adjoint projection in
$\Wt^{-1,2}(\Omega)$. Applying (\ref{4.2}), one can verify that
the range of $E^{-1,2}_{\tau}$ is $\Wts^{1,2}(\Omega)^{\perp}$.

Let $\bfg\in\Wt^{-1,2}(\Omega)$ and $\bfpsi\in\Wt^{1,2}(\Omega)$.
Then, due to (\ref{4.1}) and the orthogonality of
$E^{1,2}_{\tau}$, we have
\begin{displaymath}
\blangle\bfg,E^{1,2}_{\tau}\bfpsi\brangle_{\tau}\ =\ \bigl(
(\cA_0+I)^{-1}\bfg,E^{1,2}_{\tau}\bfpsi\bigr)_{1,2}\ =\ \bigl(
E^{1,2}_{\tau}(\cA_0+I)^{-1}\bfg,\bfpsi\bigr)_{1,2}.
\end{displaymath}
However, the duality on the left hand side can also be expressed
in another way: \ using again (\ref{4.1}) and the fact that
$E^{-1,2}_{\tau}$ is adjoint to $E^{1,2}_{\tau}$, we get
\begin{displaymath}
\blangle\bfg,E^{1,2}_{\tau}\bfpsi\brangle_{\tau}\ =\ \blangle
E^{-1,2}_{\tau}\br\bfg,\bfpsi\brangle_{\tau}\ =\ \bigl(
(\cA_0+I)^{-1} E^{-1,2}_{\tau}\br\bfg,\bfpsi\bigr)_{1,2}.
\end{displaymath}
Thus, we obtain the important identity
\begin{equation}
E^{1,2}_{\tau}\br(\cA_0+I)^{-1}\ =\ (\cA_0+I)^{-1}
E^{-1,2}_{\tau}. \label{4.3}
\end{equation}
Applying (\ref{4.3}), we can now show that the projection
$E^{-1,2}_{\tau}$ is orthogonal in $\Wt^{-1,2}(\Omega)$. Indeed,
if $\bfg,\br\bfh\in\Wt^{-1,2}(\Omega)$ then
\begin{align*}
& \bigl(E^{-1,2}_{\tau}\br\bfg,\bfh\bigr)_{-1,2}\ =\
\bigl((\cA_0+I)^{-1}E^{-1,2}_{\tau}\br\bfg,
(\cA_0+I)^{-1}\bfh\bigr)_{1,2} \\
& \hspace{20pt} =\ \bigr(E^{1,2}_{\tau}\br (\cA_0+I)^{-1}\br
\bfg,(\cA_0+I)^{-1}\bfh\bigr)_{1,2}\ =\ \bigl((\cA_0+I)^{-1}
\bfg,E^{1,2}_{\tau}(\cA_0+I)^{-1}\bfh\bigr)_{1,2} \\
& \hspace{20pt} =\
\bigl((\cA_0+I)^{-1}\bfg,(\cA_0+I)^{-1}E^{-1,2}_{\tau}
\bfh\bigr)_{1,2}\ =\ \bigl(\bfg,E^{-1,2}_{\tau}\bfh\bigr)_{-1,2}.
\end{align*}
This verifies the orthogonality of projection $E^{-1,2}_{\tau}$.

Finally, we will show that if $\phi\in C^{\infty}_0(\Omega)$ then
\begin{equation}
E^{1,2}_{\tau}\nabla\phi\ =\ \nabla\phi \qquad \mbox{for all
$\phi\in C^{\infty}_0(\Omega)$}. \label{4.4}
\end{equation}
Thus, let $\phi\in C^{\infty}_0(\Omega)$. Then
$\nabla\phi\in\Wt^{1,2}(\Omega)$ and $\,
(\cA_0+I)\nabla\phi\equiv\nabla(-\Delta+I)\phi\in\Wts^{1,2}
(\Omega)^{\perp}$. Hence
\begin{displaymath}
E^{-1,2}_{\tau}(\cA_0+I)\nabla\phi\ =\ (\cA_0+I)\nabla\phi.
\end{displaymath}
Applying (\ref{4.3}), we also get
\begin{displaymath}
E^{-1,2}_{\tau}(\cA_0+I)\nabla\phi\ =\ (\cA_0+
I)E^{1,2}_{\tau}\nabla\phi.
\end{displaymath}
Since $\cA_0+I$ is a one-to-one operator from $\Wt^{1,2}(\Omega)$
to $\Wt^{-1,2}(\Omega)$, the last two identities show that
(\ref{4.4}) holds.

\vspace{4pt} \noindent
{\bf 4.4. Existence of an associated pressure.} \ In this
paragraph, we show that to every weak solution of the IBVP
(\ref{1.1})--(\ref{1.4}), an associated pressure exists and has a
certain structure.

Let $\bfu$ be a weak solution to the IBVP
(\ref{1.1})--(\ref{1.4}). Due to \cite[Lemma III.1.1]{Te},
equation (\ref{3.9}) is equivalent to
\begin{displaymath}
\bfu(t)-\bfu(0)+\int_0^t\cPs{2}\bigl[\cA\bfu+
\cB(\bfu,\bfu)-\bff\bigr]\; \rmd\tau\ =\ \bfzero
\end{displaymath}
for a.a.~$t\in(0,T)$. (As usually, we identify $\bfu(\, .\, ,t)$
and $\bfu(t)$.) Since $\bfu(t)$ and $\bfu(0)$ are in
$\Ls^2(\Omega)$, they coincide with $\cPs{2}\bfu(t)$ and
$\cPs{2}\bfu(0)$, respectively. (See paragraph 2.4.) Hence
\begin{displaymath}
\cPs{2}\biggl(\bfu(t)-\bfu(0)+\int_0^t\bigl[\cA\bfu+
\cB(\bfu,\bfu)-\bff\bigr]\; \rmd\tau\biggr)\ =\ \bfzero.
\end{displaymath}
Define $\bfF(t)\in\Wt^{-1,2}(\Omega)$ by the formula
\begin{equation}
\bfF(t)\ :=\ \bfu(t)-\bfu(0)+\int_0^t
\bigl[\cA\bfu+\cB(\bfu,\bfu)-\bff\bigr]\; \rmd\tau.
\label{4.6}
\end{equation}
Since $\langle\bfF(t),\bfpsi\rangle_{\tau}=\langle\cPs{2}
\bfF(t),\bfpsi\rangle_{\tau,\sigma}=0$ for all $\bfpsi\in
\Wts^{1,2}(\Omega)$, $\bfF(t)$ belongs to
$\Wts^{1,2}(\Omega)^{\perp}$. Hence
$E^{-1,2}_{\tau}\br\bfF(t)=\bfF(t)$ and
$(I-E^{-1,2}_{\tau})\br\bfF(t)=\bfzero$. Thus,
\begin{align*}
(I-E^{-1,2}_{\tau})\br\bfu(t) & -(I-E^{-1,2}_{\tau})\br\bfu(0) \\
& +\int_0^t(I-E^{-1,2}_{\tau})\br\bigl[\cA\bfu+
\cB(\bfu,\bfu)-\bff\bigr]\; \rmd\tau\ =\ \bfzero
\end{align*}
holds as an equation in $\Wt^{-1,2}(\Omega)$. Applying Lemma
III.1.1 from \cite{Te}, we deduce that
\begin{displaymath}
\bigl[(I-E^{-1,2}_{\tau})\br\bfu\bigr]'+(I-E^{-1,2}_{\tau})\br
\bigl[\cA\bfu+\cB(\bfu,\bfu)-\bff\bigr]\ =\ \bfzero.
\end{displaymath}
This yields
\begin{align}
\bfu'+\cA\bfu & +\cB(\bfu,\bfu)\ =\ \bff \nonumber \\
\noalign{\vskip 2pt}
& +E^{-1,2}_{\tau}[\bfu'+\cA\bfu+\cB(\bfu,\bfu)-\bff].
\label{4.7}
\end{align}
(Here, $[(I-E^{-1,2}_{\tau})\bfu]'$ and $\bfu'$ are the
distributional derivatives with respect to $t$ of
$(I-E^{-1,2}_{\tau})\bfu$ and $\bfu$, respectively, as functions
from $(0,T)$ to $\Wt^{-1,2}(\Omega)$.) Let
$\Omega_0\subset\subset\Omega$ be a non-empty domain. By Lemma
\ref{L2.2}, there exist unique $p_1(t)$, $p_{21}(t)$, $p_{22}(t)$,
$p_{23}(t)$ in $L^2_{loc}(\Omega)$ such that
\begin{equation}
\begin{array}{rl}
\blangle -E^{-1,2}_{\tau}\bfu(t),\bfpsi\brangle_{\tau}\ &=\
{\displaystyle -\int_{\Omega} p_1(t)\ \div\bfpsi\; \rmd\bfx,} \\
[10pt]
\blangle -E^{-1,2}_{\tau}\cA\bfu(t),\bfpsi \brangle_{\tau}\ &=\
{\displaystyle -\int_{\Omega} p_{21}(t)\ \div\bfpsi\; \rmd\bfx,}
\\ [10pt]
\blangle -E^{-1,2}_{\tau}\cB(\bfu(t),\bfu(t)),\bfpsi
\brangle_{\tau}\ &=\ {\displaystyle -\int_{\Omega} p_{22}(t)\
\div\bfpsi\; \rmd\bfx,}
\\ [10pt]
\blangle -E^{-1,2}_{\tau}\bff(t),\bfpsi \brangle_{\tau}\ &=\
{\displaystyle -\int_{\Omega} p_{23}(t)\ \div\bfpsi\; \rmd\bfx}
\end{array} \label{4.8}
\end{equation}
for a.a.~$t\in(0,T)$ and all $\bfpsi\in\Wtc^{1,2}(\Omega)$ and the
inequalities
\begin{equation}
\begin{array}{lll}
\|p_1(t)\|_{2;\, \Omega_R}\ & \leq\ c(R)\,
\|E^{-1,2}_{\tau}\bfu(t)\|_{-1,2}\ & \leq\ c(R)\, \|\bfu(t)\|_{-1,2}, \\
[5pt]
\|p_{21}(t)\|_{2;\, \Omega_R}\ & \leq\ c(R)\,
\|E^{-1,2}_{\tau}\cA\bfu(t)\|_{-1,2}\ & \leq\ c(R)\,
\|\cA\bfu(t)\|_{-1,2}, \\ [5pt]
\|p_{22}(t)\|_{2;\, \Omega_R}\ & \leq\ c(R)\,
\|E^{-1,2}_{\tau}\cB(\bfu(t),\bfu(t))\|_{-1,2}\ & \leq\ c(R)\,
\|\cB(\bfu(t),\bfu(t))\|_{-1,2}, \\ [5pt]
\|p_{23}(t)\|_{2;\, \Omega_R}\ & \leq\ c(R)\,
\|E^{-1,2}_{\tau}\bff(t)\|_{-1,2}\ & \leq\ c(R)\,
\|\bff(t)\|_{-1,2}
\end{array} \label{4.9}
\end{equation}
hold for all $R>0$ and a.a.~$t\in(0,T)$. Moreover,
$\int_{\Omega_0}p_1(t)\; \rmd\bfx=\int_{\Omega_0}p_{2i}(t)\;
\rmd\bfx=0$ ($i=1,2,3$) for a.a.~$t\in(0,T)$. Using the inequality
$\|\bfu(t)\|_{-1,2}\leq \|\bfu(t)\|_2$ and estimates (\ref{3.5}),
we get
\begin{equation}
\begin{array}{ll} p_1\hspace{4.1pt}\in L^{\infty}(0,T;\,
L^2(\Omega_R)), & p_{21}\in L^2(0,T;\, L^2(\Omega_R)), \\
[5pt] p_{22}\in L^{4/3} (0,T;\, L^2(\Omega_R)), \hbox to 10pt{} &
p_{23}\in L^2(0,T;\, L^2(\Omega_R)) \end{array} \label{4.10}
\end{equation}
for all $R>0$.

For a.a.~$t\in(0,T)$, the functions $p_1(t)$ and $p_{21}(t)$ are
harmonic in $\Omega$. This follows from the identities
\begin{align*}
\int_{\Omega} p_1(t)\, \Delta\phi\; \rmd\bfx\ &=\ -\blangle \nabla
p_1(t),\nabla\phi\brangle_{\tau}\ =\ \blangle
E^{-1,2}_{\tau}\bfu(t),\nabla\phi\brangle_{\tau}\ =\ \blangle
\bfu(t),E^{1,2}_{\tau}\br\nabla\phi\brangle_{\tau} \\
&=\ \blangle\bfu(t),\nabla\phi\brangle_{\tau}\ =\ \int_{\Omega}
\bfu(t)\cdot\nabla\phi\; \rmd\bfx\ =\ 0 \quad \mbox{(for all
$\phi\in C_0^{\infty}(\Omega)$).}
\end{align*}
(We have used (\ref{4.4}).) Hence, by Weyl's lemma, $p_1(t)$ is a
harmonic function in $\Omega$. The fact that $p_{21}(t)$ is
harmonic can be proved similarly.

Equation (\ref{4.7}) is an equation in $\Wt^{-1,2}(\Omega)$.
Applying successively each term in (\ref{4.7}) to the function of
the type $\bfvarphi(\bfx)\, \eta(t)$, where
$\bfvarphi\in\Wtc(\Omega)$ and $\eta\in C^{\infty}_0(0,T)$, using
formulas (\ref{4.8}), and denoting $p_2:=p_{21}+p_{22}+p_{23}$, we
obtain
\begin{gather*}
\int_0^T \int_{\Omega} \bigl[ -\bfu\cdot\bfvarphi\, \eta'(t)+
\nu\br\nabla\bfu:\nabla\bfvarphi\, \eta(t)+\bfu\cdot\nabla
\bfu\cdot \bfvarphi\, \eta(t) \bigr]\; \rmd\bfx\, \rmd
t+\int_0^T\int_{\Omega} \gamma\, \bfu\cdot\bfvarphi\, \eta(t)\;
\rmd S\, \rmd t \\
=\ \int_0^T \langle\bff,\bfvarphi\rangle_{\tau}\, \eta(t)\; \rmd
t-\int_0^T\int_{\Omega} p_1\ \div\bfvarphi\ \eta'(t)\; \rmd\bfx\,
\rmd t+\int_0^T\int_{\Omega} p_2\ \div \bfvarphi\ \eta(t)\;
\rmd\bfx\, \rmd t
\end{gather*}
for all functions $\bfvarphi\in\Wtc^{1,2}(\Omega)$ and $\eta\in
C^{\infty}_0((0,T))$. Since the set of all finite linear
combinations of functions of the type $\bfvarphi(\bfx)\, \eta(t)$,
where $\bfvarphi\in\Wtc^{1,2}(\Omega)$ and $\eta\in
C^{\infty}_0((0,T))$, is dense in $C^{\infty}_0\bigl((0,T);\,
\Wtc^{1,2}(\Omega)\bigr)$ in the norm of $W^{1,2}_0(0,T;\,
\Wt^{1,2}(\Omega))$, we also obtain the equation
\begin{gather}
\int_0^T \int_{\Omega} \bigl[ -\bfu\cdot\partial_t\bfphi+
\nu\br\nabla\bfu:\nabla\bfphi+\bfu\cdot\nabla\bfu\cdot\bfphi
\bigr]\; \rmd\bfx\, \rmd t+\int_0^T\int_{\Omega} \gamma\,
\bfu\cdot\bfphi\; \rmd S\, \rmd t \nonumber \\
=\ \int_0^T \langle\bff,\bfphi\rangle_{\tau}\; \rmd
t-\int_0^T\int_{\Omega} p_1\ \div\partial_t\bfphi\; \rmd\bfx\,
\rmd t+\int_0^T\int_{\Omega} p_2\ \div \bfphi\; \rmd\bfx\, \rmd t
\label{4.5}
\end{gather}
for all $\bfphi\in C^{\infty}_0\bigl((0,T);\,
\Wtc^{1,2}(\Omega)\bigr)$. Choosing particularly
$\bfphi\in\bfC^{\infty}_0(Q_T)$ and putting
\begin{equation}
p\ :=\ \partial_tp_1+p_2\ \equiv\ \partial_tp_1+p_{21}+p_{22}+
p_{23} \label{4.14}
\end{equation}
(where $\partial_tp_1$ is the derivative in the sense of
distributions), we observe that $(\bfu,p)$ is a distributional
solution of the system (\ref{1.1}), (\ref{1.2}) in $Q_T$.

The next theorem summarizes the results of this subsection:

\begin{theorem} \label{T4.2}
Let $T>0$ and $\Omega$ be a locally Lipschitz domain in $\R^3$,
satisfying condition (i) from subsection 1.1. Let $\bfu$ be a weak
solution to the Navier-Stokes IBVP (\ref{1.1})--(\ref{1.4}). Then
there exists an associated pressure $p$ in the form (\ref{4.14}),
where $p_1$, $p_{21}$, $p_{22}$, $p_{23}$ satisfy
(\ref{4.8})--(\ref{4.10}). Moreover,

\begin{list}{}
{\setlength{\topsep 2pt}
\setlength{\itemsep 0pt}
\setlength{\leftmargin 20pt}
\setlength{\rightmargin 0pt}
\setlength{\labelwidth 16pt}}

\item[1) ]
if $\, \Omega_0\subset\subset\Omega$ then the functions $p_1(t),\,
p_{21}(t),\, p_{22}(t),\, p_{32}(t)$ can be chosen so that they
satisfy the additional conditions
\begin{displaymath}
\int_{\Omega_0}p_1(t)\; \rmd\bfx\ =\ \int_{\Omega_0}p_{21}(t)\;
\rmd\bfx\ =\ \int_{\Omega_0}p_{23}(t)\; \rmd\bfx\ =\
\int_{\Omega_0}p_{23}(t)\; \rmd\bfx\ =\ 0,
\end{displaymath}

\item[2) ]
the functions $p_1(t)$ and $p_{21}(t)$ are harmonic in $\Omega$
for a.a.~$t\in(0,T)$,

\item[3) ]
the functions $\bfu$, $p_1$ and $p_2\equiv p_{21}+p_{22}+p_{23}$
satisfy the integral equation (\ref{4.5}) for all test functions
$\bfphi\in C^{\infty}_0\bigl((0,T;\, \Wtc^{1,2}(\Omega)\bigr)$.

\end{list}
\end{theorem}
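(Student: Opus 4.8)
The plan is to assemble Theorem \ref{T4.2} directly from the construction carried out in subsection 4.4, so the proof is largely a matter of collecting the pieces already in place and invoking Lemma \ref{L2.2} at the right moment. First I would recall that, since $\bfu$ is a weak solution, the equivalent integral form of (\ref{3.9}) given by \cite[Lemma III.1.1]{Te} holds, and combining this with the fact that $\bfu(t),\bfu(0)\in\Ls^2(\Omega)$ coincide with their images under $\cPs{2}$ (paragraph 2.4), the function $\bfF(t)$ defined in (\ref{4.6}) satisfies $\cPs{2}\bfF(t)=\bfzero$, hence $\bfF(t)\in\Wts^{1,2}(\Omega)^{\perp}$ for a.a.~$t$. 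This is the key structural fact: it says $E^{-1,2}_{\tau}\bfF(t)=\bfF(t)$, so applying $I-E^{-1,2}_{\tau}$ to $\bfF(t)$ kills it, and differentiating in $t$ (again via \cite[Lemma III.1.1]{Te}) produces equation (\ref{4.7}).

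Next I would fix a nonempty $\Omega_0\subset\subset\Omega$ and apply Lemma \ref{L2.2} separately to the four elements $-E^{-1,2}_{\tau}\bfu(t)$, $-E^{-1,2}_{\tau}\cA\bfu(t)$, $-E^{-1,2}_{\tau}\cB(\bfu(t),\bfu(t))$, $-E^{-1,2}_{\tau}\bff(t)$ of $\Wts^{1,2}(\Omega)^{\perp}$ (each lies in that space because $E^{-1,2}_{\tau}$ has range $\Wts^{1,2}(\Omega)^{\perp}$). This yields, for a.a.~$t\in(0,T)$, the unique functions $p_1(t),p_{21}(t),p_{22}(t),p_{23}(t)\in L^2_{loc}(\Omega)$ with mean value zero over $\Omega_0$ satisfying the representation identities (\ref{4.8}) and the bounds (\ref{2.9*}), which upon substituting $\|E^{-1,2}_{\tau}\bfv\|_{-1,2}\le\|\bfv\|_{-1,2}$ and then $\|\bfu\|_{-1,2}\le\|\bfu\|_2$ become (\ref{4.9}); integrating these in $t$ against the global bounds $\bfu\in L^{\infty}(0,T;\Ls^2(\Omega))$ together with (\ref{3.5}) gives the integrability statements (\ref{4.10}). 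The measurability in $t$ of each $p_{i}(\cdot)$ — needed to make the $L^p$-in-time assertions meaningful — follows because the right-hand data depend measurably on $t$ and the Bogovskij-based construction in Lemma \ref{L2.2} is linear and bounded; I would note this but not belabour it. For part 2), harmonicity of $p_1(t)$ and $p_{21}(t)$ is exactly the computation displayed after (\ref{4.10}): testing with $\nabla\phi$, $\phi\in C_0^{\infty}(\Omega)$, using the identity (\ref{4.4}) that $E^{1,2}_{\tau}\nabla\phi=\nabla\phi$ and the divergence-freeness of $\bfu$ (resp.\ the skew-type structure of $\cA$), one gets $\int_{\Omega}p_1(t)\Delta\phi\,\rmd\bfx=0$, so Weyl's lemma applies.

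Finally, for part 3), I would apply each term of the $\Wt^{-1,2}(\Omega)$-valued equation (\ref{4.7}) to test functions $\bfvarphi(\bfx)\eta(t)$ with $\bfvarphi\in\Wtc^{1,2}(\Omega)$ and $\eta\in C_0^{\infty}((0,T))$, substitute the representations (\ref{4.8}) for the four $E^{-1,2}_{\tau}$-terms, set $p_2:=p_{21}+p_{22}+p_{23}$, and recognise that the resulting identity is (\ref{4.5}) for product test functions; density of finite linear combinations of such products in $C_0^{\infty}((0,T);\Wtc^{1,2}(\Omega))$ in the $W^{1,2}_0(0,T;\Wt^{1,2}(\Omega))$-norm then extends (\ref{4.5}) to all admissible $\bfphi$. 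Restricting to $\bfphi\in\bfC_0^{\infty}(Q_T)$ and setting $p:=\partial_t p_1+p_2$ as in (\ref{4.14}) shows $(\bfu,p)$ solves (\ref{1.1})--(\ref{1.2}) distributionally, so $p$ is an associated pressure. The main obstacle, such as it is, is bookkeeping rather than a genuine difficulty: one must be careful that the pointwise-in-$t$ applications of Lemma \ref{L2.2} and of \cite[Lemma III.1.1]{Te} are compatible — i.e.\ that the null sets can be chosen uniformly — and that the passage from product test functions to general $\bfphi\in C_0^{\infty}((0,T);\Wtc^{1,2}(\Omega))$ genuinely needs only $W^{1,2}_0$-in-time regularity, which is why the $p_1$-term appears differentiated in (\ref{4.5}) rather than $p_1$ itself appearing with $\partial_t\bfphi$ moved onto it.
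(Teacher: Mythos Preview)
Your proposal is correct and follows essentially the same approach as the paper's own proof in subsection 4.4: the same passage from (\ref{3.9}) to the integrated form via \cite[Lemma III.1.1]{Te}, the same identification of $\bfF(t)\in\Wts^{1,2}(\Omega)^{\perp}$ leading to (\ref{4.7}), the same four applications of Lemma \ref{L2.2} and the same density argument to reach (\ref{4.5}). Your added remarks on measurability in $t$ and on the compatibility of null sets are points the paper glosses over, but they do not constitute a different method.
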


\noindent
Note that if $\Omega$ is a bounded Lipschitz domain then the
choice $\Omega_0=\Omega$ is also permitted in statement 1) of
Theorem \ref{T4.2}.


\section{The case of a smooth bounded domain $\Omega$} \label{S5}

{\bf 5.1. Some results from paper \cite{AmEsGh}.} \ In this
section, we assume that $\Omega$ is a bounded domain in $\R^3$
with the boundary of the class $C^2$. We denote by $\Aq$ (for
$1<q<\infty$) the linear operator in $\Ls^q(\Omega)$ with the
domain defined by the equation
\begin{displaymath}
\Aq\bfv\ :=\ -\nu\, P_q\br\Delta\bfv
\end{displaymath}
for $\bfv\in D(\Aq)$, where
\begin{displaymath}
D(\Aq)\ :=\
\bigl\{\bfv\in\bfW^{2,q}(\Omega)\cap\Wts^{1,q}(\Omega);\
[\bbTd(\bfv)\cdot\bfn]_{\tau}+\gamma\br\bfv_{\tau}=\bfzero\
\mbox{on}\ \partial\Omega \bigr\}
\end{displaymath}
is the domain of operator $\Aq$. Recall that $\bbTd(\bfv)\equiv
2\nu\br\bbD(\bfv)$ is the dynamic stress tensor, induced by the
vector field $\bfv$, and $P_q$ is the Helmholtz projection in
$\bfL^q(\Omega)$. Operator $\Aq$ is usually called the {\it Stokes
operator} in $\Ls^q(\Omega)$. Particularly, if $q=2$ then $A_2$
coincides with the restriction of operator $\cA$, defined in
subsection 3.2, to $D(A_2)$. It is shown in the paper
\cite{AmEsGh} by Ch.~Amrouche, M.~Escobedo and A.~Ghosh that
$(-\Aq)$ generates a bounded analytic semigroup $\rme^{-\Aq t}$ in
$\Ls^q(\Omega)$. The next lemma also comes from \cite{AmEsGh}, see
\cite[Theorem 1.3]{AmEsGh}. It concerns the solution of the
inhomogeneous non--steady Stokes problem, given by the equations
\begin{equation}
\partial_t\bfu+\nabla \pi\ =\ \nu\Delta\bfu+\bfg \label{5.1}
\end{equation}
and (\ref{1.2}) (in $Q_T$), by the boundary conditions (\ref{1.3})
and by the initial condition (\ref{1.4}). The initial velocity
$\bfu_0$ is supposed to be from the space $\bfE_r^q(\Omega)$,
which is defined to be the real interpolation space $[D(\Aq),\,
\Ls^q(\Omega)]_{1/r,r}$. The problem (\ref{5.1}),
(\ref{1.2})--(\ref{1.3}) can also be equivalently written in the
form
\begin{equation}
\frac{\rmd\bfu}{\rmd t}+A_q\bfu\ =\ \bfg, \qquad \bfu(0)=\bfu_0,
\label{5.1a}
\end{equation}
which is the initial--value problem in $\Ls^q(\Omega)$. Although
the pressure $\pi$ does not explicitly appear in (\ref{5.1a}), it
can be always reconstructed in the way described in section
\ref{S4}.) The lemma says:

\begin{lemma} \label{L5.1}
Let $r,q\in(1,\infty)$, $T>0$, $\bfg\in L^r(0,T;\, \Ls^q(\Omega))$
and $\bfu_0\in\bfE_r^q(\Omega)$. Then the Stokes problem
(\ref{5.1}), (\ref{1.2}), (\ref{1.3}), (\ref{1.4}) has a unique
solution $(\bfu,\pi)$ in $\bigl[ W^{1,r}(0,T;\, \Ls^q(\Omega))\cap
L^r(0,T;\, \bfW^{2,q}(\Omega)) \bigr] \times L^r(0,T;\,
W^{1,q}(\Omega)/\R)$. The solution satisfies the estimate
\begin{equation}
\int_0^T  \|\partial_t\bfu\|_q^r\; \rmd
t+\int_0^T\|\bfu\|_{2,q}^r\; \rmd t+\int_0^T\|\pi\|_{1,q}^r\; \rmd
t\ \leq\ C\, \biggl( \int_0^T\|\bfg\|_q^r\; \rmd t+
\|\bfu_0\|_{\bfE_r^q(\Omega)}^r \biggr). \label{5.2}
\end{equation}
\end{lemma}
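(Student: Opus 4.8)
A complete proof is given in \cite[Theorem~1.3]{AmEsGh}; below is the strategy one would follow.

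\emph{Reduction to an abstract problem and maximal regularity.} Problem (\ref{5.1}), (\ref{1.2}), (\ref{1.3}), (\ref{1.4}) is equivalent to the initial-value problem (\ref{5.1a}) for the Stokes operator $\Aq$ in $\Ls^q(\Omega)$, and, as recalled above, $-\Aq$ generates a bounded analytic semigroup $\rme^{-\Aq t}$. The solution of (\ref{5.1a}) is $\bfu(t)=\rme^{-\Aq t}\bfu_0+\int_0^t\rme^{-\Aq(t-s)}\bfg(s)\;\rmd s$, so everything reduces to showing that $\Aq$ enjoys maximal $L^r$-regularity, i.e.\ that $\bfg\mapsto\int_0^t\Aq\,\rme^{-\Aq(t-s)}\bfg(s)\;\rmd s$ is bounded on $L^r(0,\infty;\Ls^q(\Omega))$, together with the classical trace characterisation: $t\mapsto\rme^{-\Aq t}\bfu_0$ lies in $W^{1,r}(0,\infty;\Ls^q(\Omega))\cap L^r(0,\infty;D(\Aq))$ precisely when $\bfu_0\in(D(\Aq),\Ls^q(\Omega))_{1/r,r}=\bfE_r^q(\Omega)$. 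Combined, these give $\bfu\in W^{1,r}(0,T;\Ls^q(\Omega))\cap L^r(0,T;D(\Aq))$; since $D(\Aq)\hookrightarrow\bfW^{2,q}(\Omega)$ via the elliptic estimate built into the definition of $\Aq$, this yields the $\bfW^{2,q}$-contribution to (\ref{5.2}), the passage from $(0,\infty)$ to the finite interval $(0,T)$ being harmless (extend $\bfg$ by $\bfzero$).

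\emph{$\cR$-sectoriality of $\Aq$ --- the crux.} By Weis' theorem, maximal $L^r$-regularity of $\Aq$ on the UMD space $\Ls^q(\Omega)$ is equivalent to $\cR$-sectoriality of $\Aq$, which in turn follows from the boundedness of the $H^\infty$-calculus of $\Aq$ (the latter also yielding bounded imaginary powers, so that Dore--Venni is an alternative route). To establish this I would study the resolvent problem $\lambda\bfv-\nu\Delta\bfv+\nabla\pi=\bff$, $\div\bfv=0$ in $\Omega$, with $\bfv\cdot\bfn=0$ and $[\bbTd(\bfv)\cdot\bfn]_{\tau}+\gamma\bfv=\bfzero$ on $\partial\Omega$, by localisation: a finite cover of $\overline{\Omega}$ and a subordinate partition of unity reduce matters to the whole-space Stokes resolvent (whose $H^\infty$-calculus is classical) in interior charts and to the half-space Stokes problem with Navier's boundary conditions in boundary charts. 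The half-space model is treated by a partial Fourier transform in the tangential variables, producing an explicit representation of $(\bfv,\nabla\pi)$ which lets one verify the uniform bound $\|\lambda(\lambda+\Aq)^{-1}\|\le C$ on a sector $|\arg\lambda|<\pi-\varepsilon$ and, more strongly, the $\cR$-boundedness of $\{\lambda(\lambda+\Aq)^{-1}\}$ there via an operator-valued Fourier-multiplier theorem. The charts are then patched; the commutators generated by the cut-offs are of lower order and are absorbed for large $|\lambda|$, while the behaviour near $\lambda=0$ is controlled by a Neumann-series/perturbation argument exploiting the boundedness of $\Omega$.

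\emph{Recovery of the pressure and conclusion.} With $\bfu$ in hand, set $\nabla\pi:=(I-\Ps{q})(\nu\Delta\bfu+\bfg)$; since $\partial_t\bfu\in\Ls^q(\Omega)$ is solenoidal and tangential, $(I-\Ps{q})\partial_t\bfu=\bfzero$, so $(\bfu,\pi)$ solves (\ref{5.1}). Normalising $\pi(\cdot,t)$ to zero mean over $\Omega$ and using the boundedness of $I-\Ps{q}$ on $\bfL^q(\Omega)$ together with the Poincar\'e--Wirtinger inequality, $\|\pi(t)\|_{1,q}\le c\,\|\nabla\pi(t)\|_q\le c\,(\|\bfu(t)\|_{2,q}+\|\bfg(t)\|_q)$; raising to the power $r$, integrating over $(0,T)$ and invoking the maximal-regularity bounds on $\int_0^T\|\bfu\|_{2,q}^r\;\rmd t$ and $\int_0^T\|\partial_t\bfu\|_q^r\;\rmd t$ yields (\ref{5.2}). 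For uniqueness, the difference of two solutions solves (\ref{5.1a}) with zero data, hence vanishes by abstract uniqueness for analytic semigroups; then $\nabla\pi\equiv\bfzero$, i.e.\ $\pi$ is spatially constant, thus zero in $W^{1,q}(\Omega)/\R$. The main obstacle is the middle step: the resolvent analysis of the half-space Stokes problem under Navier's conditions and the handling of the lower-order commutators in the localisation.
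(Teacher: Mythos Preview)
Your proposal is essentially correct and, in fact, more detailed than what the paper provides. The paper does not give an independent proof of this lemma: it simply states that the result is \cite[Theorem~1.3]{AmEsGh} and adds the single remark that ``the proof is based on a more general theorem from the paper \cite{GiSo} by Y.~Giga and H.~Sohr.'' You correctly identify the same source at the outset.

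The only point worth flagging is a slight difference in emphasis. The paper's reference to Giga--Sohr indicates that the route taken in \cite{AmEsGh} is through bounded imaginary powers of $\Aq$ and the Dore--Venni theorem, which is the abstract machinery of \cite{GiSo}. You instead foreground $\cR$-sectoriality and Weis' theorem, mentioning Dore--Venni only as an alternative. Both are legitimate paths to maximal $L^r$-regularity on a UMD space, and your sketch of the resolvent analysis (localisation, half-space model via tangential Fourier transform, absorption of lower-order commutators) is the standard scheme either way; but if you want to align with what \cite{AmEsGh} actually does, the BIP/Dore--Venni framing is the more faithful description.
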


The proof is based on a more general theorem from the paper
\cite{GiSo} by Y.~Giga and H.~Sohr.

\vspace{4pt} \noindent
{\bf 5.2. Application of Lemma \ref{L5.1}.} \ If $\bfu$ is a weak
solution to the problem (\ref{1.1})--(\ref{1.4}) then, since
$\bfu\in L^{\infty}(0,T;\ \Ls^2(\Omega)) \cap L^2(0,T;\
\Wts^{1,2}(\Omega))$, one can verify that $\bfu\cdot\nabla\bfu\in
L^r(0,T;\, \bfL^q(\Omega))$ for all $1\leq r\leq 2$, $1\leq
q\leq\frac{3}{2}$, satisfying $2/r+3/q=4$. In order to be
consistent with the assumptions of Lemma \ref{L5.1} regarding $q$
and $r$, assume that $1<q<\frac{3}{2}$, $1<r<2$ and $2/r+3/q=4$.
Furthermore, assume that
$\bfu_0\in\bfE^q_r(\Omega)\cap\Ls^2(\Omega)$ and function $\bff$
on the right hand side of equation (\ref{1.1}) is in $L^r(0,T;\,
\bfL^q(\Omega)) \cap L^2(0,T;\, \Wt^{-1,2} (\Omega))$. Put
$\bfg:=P_q\bff-P_q(\bfu\cdot\nabla\bfu)$. Then, due to the
boundedness of projection $P_q$ in $\bfL^q(\Omega)$, $\bfg\in
L^r(0,T;\, \Ls^q(\Omega))$. Assume, moreover, that
$\bfu_0\in\bfE_r^q(\Omega)$. Now, we are in a position that we can
apply Lemma \ref{L5.1} and deduce that the linear Stokes problem
(\ref{5.1}), (\ref{1.2})--(\ref{1.4}) has a unique solution
$(\bfU,\pi)\in \bigl[ W^{1,r}(0,T;\, \Ls^q(\Omega))\cap L^r(0,T;\,
\bfW^{2,q}(\Omega)) \bigr] \times L^r(0,T;\, W^{1,q}(\Omega)/\R)$,
satisfying estimate (\ref{5.2}) with $\bfU$ instead of $\bfu$. In
order to show that the weak solution $\bfu$ of the nonlinear
Navier--Stokes problem (\ref{1.1})--(\ref{1.4}) satisfies the same
estimate, too, we need to identify $\bfu$ with $\bfU$.

\vspace{4pt} \noindent
{\bf 5.3. The identification of $\bfU$ and $\bfu$.} \ It is not
obvious at the first sight that $\bfU=\bfu$, because while $\bfU$
is a unique solution of the problem (\ref{5.1}),
(\ref{1.2})--(\ref{1.4}) in the class $W^{1,r}(0,T;\,
\Ls^q(\Omega))\cap L^r(0,T;\, \bfW^{2,q}(\Omega))$, $\bfu$ is only
known to be in $L^{\infty}(0,T;\ \Ls^2(\Omega)) \cap L^2(0,T;\
\Wts^{1,2}(\Omega))$. Nevertheless, applying the so called Yosida
approximation of the identity operator in $\Ls^q(\Omega)$, defined
by the formula $J^{(k)}_q:=(I+k^{-1}A_q)^{-1}$ (for $k\in\N$), in
the same spirit as in \cite{GiSo} or \cite{SoWa}, the equality
$\bfU=\bfu$ can be established. We explain the main steps of the
procedure in greater detail in the rest of this subsection.

At first, one can deduce from \cite[Section 3]{AmEsGh} that the
spectrum of $A_q$ is a subset of the interval $(0,\infty)$ on the
real axis, which implies that $J^{(k)}_q$ is a bounded operator on
$\Ls^q(\Omega)$ with values in $D(A_q)$. Obviously, $J^{(k)}_q$
commutes with $A_q$ and with $J^{(m)}_q$ (for $k,m\in\N$,
$k\not=m$) and $J^{(k)}_q=J^{(k)}_s$ on $\Ls^q(\Omega)\cap
\Ls^s(\Omega)$ (for $1<s<\infty$). If $q=2$ then $A_2$ is a
positive selfadjoint operator in $\Ls^2$, see \cite{BdV}.
Consequently, $J^{(k)}_2$ is a selfadjoint operator in
$\Ls^2(\Omega)$, too. Finally, it is proven in \cite[p.~246]{Yo}
that $J^{(k)}_q\bfv\to\bfv$ strongly in $\Ls^q(\Omega)$ for all
$\bfv\in\Ls^q(\Omega)$ and $k\to\infty$.

Consider (\ref{3.1}) with $\bfphi(\bfx,t)=[J^{(k)}_q\bfw](\bfx)\,
\vartheta(t)$, where $k\in\N$, $\bfw\in\Cns(\Omega)$ and
$\vartheta\in C^{\infty}_0\bigl([0,T)\bigr)$. In this case,
(\ref{3.1}) yields
\begin{align}
\int_0^T \int_{\Omega}\bigl[-\bfu & \cdot J^{(k)}_q\bfw\,
\vartheta'+ (\bfu\cdot\nabla\bfu)\cdot J^{(k)}_q\bfw\, \vartheta+
2\nu\br(\nabla\bfu)_s:(\nabla J^{(k)}_q\bfw)_s\bigr]\, \vartheta\;
\rmd\bfx\, \rmd t \nonumber \\ \noalign{\vskip-4pt}
& \hspace{14pt} +\int_0^T\int_{\partial\Omega}\gamma\br\bfu\cdot
J^{(k)}_q\bfw\, \vartheta\; \rmd S\, \rmd t \nonumber \\
\noalign{\vskip 2pt}
&=\ \int_0^T\int_{\Omega}\bff\cdot J^{(k)}_q\bfw\, \vartheta\;
\rmd\bfx\, \rmd t+ \int_{\Omega}\bfu_0\cdot J^{(k)}_q\bfw\,
\vartheta(0)\, \rmd\bfx. \label{5.3}
\end{align}
The integral of $(\bfu\cdot\nabla\bfu)\cdot J^{(k)}_q\bfw$ in
$\Omega$ can be rewritten as follows:
\begin{align*}
\int_{\Omega}(\bfu & \cdot\nabla\bfu)\cdot J^{(k)}_q\bfw\;
\rmd\bfx = \int_{\Omega}P_q(\bfu\cdot\nabla\bfu)\cdot
J^{(k)}_q\bfw\; \rmd\bfx = \int_{\Omega}
P_q(\bfu\cdot\nabla\bfu)\cdot J^{(k)}_2\bfw\; \rmd\bfx \\
&= \lim_{m\to\infty}\ \int_{\Omega}J^{(m)}_q
P_q(\bfu\cdot\nabla\bfu)\cdot J^{(k)}_2\bfw\; \rmd\bfx =
\lim_{m\to\infty}\ \int_{\Omega}J^{(k)}_2J^{(m)}_q
P_q(\bfu\cdot\nabla\bfu)\cdot \bfw\; \rmd\bfx \\
&= \lim_{m\to\infty}\ \int_{\Omega}J^{(k)}_qJ^{(m)}_q
P_q(\bfu\cdot\nabla\bfu)\cdot \bfw\; \rmd\bfx = \lim_{m\to\infty}\
\int_{\Omega}J^{(m)}_qJ^{(k)}_q P_q(\bfu\cdot\nabla\bfu)\cdot
\bfw\; \rmd\bfx \\
&= \int_{\Omega} J^{(k)}_q P_q(\bfu\cdot\nabla\bfu)\cdot \bfw\;
\rmd\bfx.
\end{align*}
This shows, except others, that the integrals of $\bfv_1\cdot
J^{(k)}_q\bfv_2$ and $J^{(k)}_q\bfv_1\cdot\bfv_2$ in $\Omega$ are
equal for $\bfv_1,\, \bfv_2\in\Ls^q(\Omega)$. The integrals of
$2\nu\br(\nabla\bfu)_s:(\nabla J^{(k)}_q\bfw)_s$ and
$\gamma\br\bfu\cdot J^{(k)}_q\bfw$ over $\Omega$ and
$\partial\Omega$, respectively, can be modified by means of the
identities:
\begin{align*}
\int_{\Omega} & 2\nu\br(\nabla\bfu)_s:(\nabla J^{(k)}_q\bfw)_s\;
\rmd\bfx+\int_{\partial\Omega}\gamma\bfu\cdot J^{(k)}_q\bfw\; \rmd
S \\
&=\ \int_{\Omega}2\nu\br\nabla\bfu : (\nabla J^{(k)}_q\bfw)_s\;
\rmd\bfx+\int_{\partial\Omega}\gamma\bfu\cdot J^{(k)}_q\bfw\; \rmd
S \\
&=\ \int_{\partial\Omega}2\nu\br\bfu\cdot[(\nabla
J^{(k)}_q\bfw)_s\cdot\bfn]\; \rmd S-\int_{\Omega}\nu\br\bfu\cdot
\Delta J^{(k)}_q\bfw\; \rmd\bfx+\int_{\partial\Omega}\gamma
\bfu\cdot J^{(k)}_q\bfw\; \rmd S \\
&=\ -\int_{\Omega}\nu\br\bfu\cdot \Delta J^{(k)}_q\bfw\; \rmd\bfx\
=\ \int_{\Omega}\bfu\cdot A_q J^{(k)}_q\bfw\; \rmd\bfx\ =\
\int_{\Omega}A_q\bfu\cdot J^{(k)}_q\bfw\; \rmd\bfx \\
&=\ -\int_{\Omega}J^{(k)} A_q\bfu\cdot\bfw\; \rmd\bfx\ =\
-\int_{\Omega}A_q J^{(k)}\bfu\cdot\bfw\; \rmd\bfx.
\end{align*}
Thus, we obtain from (\ref{5.3}):
\begin{align*}
\int_0^T \int_{\Omega}\bigl[-J^{(k)}_q\bfu & \cdot\bfw\,
\vartheta'+J^{(k)}_q P_q(\bfu\cdot\nabla\bfu)\cdot\bfw\,
\vartheta-\nu A_q J^{(k)}_q\bfu\cdot\bfw \bigr]\, \vartheta\;
\rmd\bfx\, \rmd t \nonumber \\ \noalign{\vskip 0pt}
&=\ \int_0^T\int_{\Omega} J^{(k)}_q\bff\cdot\bfw\, \vartheta\;
\rmd\bfx\, \rmd t+\int_{\Omega} J^{(k)}_q\bfu_0\cdot\bfw\,
\vartheta(0)\, \rmd\bfx.
\end{align*}
As $\bfw$ and $\vartheta$ are arbitrary functions from
$\Cns(\Omega)$ and $C^{\infty}_0\bigl([0,T)\bigr)$, respectively,
this shows that $J^{(k)}_q\bfu$ is a solution of the
initial--value problem
\begin{equation}
(J^{(k)}_q\bfu)'+A_q J^{(k)}_q\bfu\ =\ J^{(k)}_q\bfg, \qquad
J^{(k)}_q\bfu(\, .\, ,0)=J^{(k)}_q\bfu_0 \label{5.4}
\end{equation}
(which is a problem in $\Ls^q(\Omega)$) in the class
$W^{1,r}(0,T;\, \Ls^q(\Omega))\cap L^r(0,T;\,
\bfW^{2,q}(\Omega))$. Since $J^{(k)}_q\bfU$ solves the same
problem and belongs to the same class, we obtain the identity
$J^{(k)}_q\bfU(t)=J^{(k)}_q\bfu(t)$ for a.a.~$t\in(0,T)$.
Consequently, $\bfU(t)=\bfu(t)$ for a.a.~$t\in(0,T)$.

\vspace{4pt} \noindent
{\bf 5.4. The estimate of $\bfu$ and an associated pressure $p$.}
\ Since $\bfg=P_q\bff-P_q(\bfu\cdot\nabla\bfu)$, we can also write
equation (\ref{5.1}) in the form
\begin{align*}
\partial_t\bfu+\bfu\cdot\nabla\bfu\ &=\ -\nabla\pi+\nu\Delta\bfu+
\bff+(I-P_q)(-\bff+\bfu\cdot\nabla\bfu) \\
&=\ -\nabla(\pi+\zeta)+\nu\Delta\bfu+\bff,
\end{align*}
where $\nabla\zeta=(I-P_q)(\bfu\cdot\nabla\bfu-\bff)$. (The fact
that $(I-P_q)(\bfu\cdot\nabla\bfu-\bff)$ can be expressed in the
form $\nabla\zeta$ follows e.g.~from \cite[section III.1]{Ga1}.)
We observe that $p:=\pi+\zeta$ is a pressure, associated with the
weak solution $\bfu$. Since the pair $(\bfU,\pi)$ satisfies
(\ref{5.2}), $\bfu$ and $p$ satisfy the analogous estimate
\begin{align}
\int_0^T & \|\partial_t\bfu\|_q^r\; \rmd t+\int_0^T
\|\bfu\|_{2,q}^r\; \rmd t+\int_0^T\|p\|_{1,q}^r\; \rmd t
\nonumber \\
&\leq\ C\int_0^T\bigl( \|\bff\|_q^r+\| P_q(\bfu\cdot
\nabla\bfu)\|_q^r \bigr)\; \rmd t+C\,
\|\bfu_0\|_{\bfE_r^q(\Omega)}^r. \label{5.5}
\end{align}
We have proven the theorem:

\begin{theorem} \label{T5.1}
Let $\Omega$ be a bounded domain in $\R^3$ with the boundary of
the class $C^2$ and $T>0$. Let $1<q<\frac{3}{2}$, $1<r<2$,
$2/r+3/q=4$, $\bfu_0\in\bfE_r^q(\Omega)\cap \Ls^2(\Omega)$ and
$\bff\in L^r(0,T;\, \bfL^q(\Omega))\cap L^2(0,T;\,
\bfL^2(\Omega))$. Let $\bfu$ be a weak solution to the
Navier-Stokes IBVP (\ref{1.1})--(\ref{1.4}) and $p$ be an
associated pressure. Then $\bfu\in L^r(0,T;\ \bfW^{2,q}(\Omega))
\cap W^{1,r}(0,T;\, \bfL^q(\Omega))$ and $p$ can be identified
with a function from $L^r(0,T;\, L^{3q/(3-q)}(\Omega))$. The
functions $\bfu$, $p$ satisfy equations (\ref{1.1}), (\ref{1.2})
a.e.~in $Q_T$ and the boundary conditions (\ref{1.3}) a.e.~in
$\Gamma_T$. Moreover, they also satisfy estimate (\ref{5.5}).
\end{theorem}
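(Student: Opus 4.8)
The plan is to verify that the hypotheses of Lemma \ref{L5.1} are met for a suitable data pair and then to use the already-proved identification $\bfU = \bfu$ (subsection 5.3) to transfer the linear Stokes estimate \eqref{5.2} onto the weak solution. The first step is to check that, under the assumption $\bfu \in L^{\infty}(0,T;\, \Ls^2(\Omega))\cap L^2(0,T;\, \Wts^{1,2}(\Omega))$, one has $\bfu\cdot\nabla\bfu \in L^r(0,T;\, \bfL^q(\Omega))$ for the admissible pair $1<q<\tfrac32$, $1<r<2$, $2/r+3/q = 4$. This is the standard interpolation computation: $\|\bfu\cdot\nabla\bfu\|_q \le \|\bfu\|_{q^*}\, \|\nabla\bfu\|_2$ with $1/q = 1/q^* + 1/2$, so $q^* = 2q/(2-q) \in (2,6)$; interpolating $\|\bfu\|_{q^*}$ between $\|\bfu\|_2$ (bounded in $t$) and $\|\bfu\|_6 \le c\,\|\bfu\|_{1,2}$ gives exactly the exponent $r$ satisfying $2/r + 3/q = 4$. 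Then $\bfg := P_q\bff - P_q(\bfu\cdot\nabla\bfu)$ lies in $L^r(0,T;\, \Ls^q(\Omega))$ by boundedness of the Helmholtz projection $P_q$ on $\bfL^q(\Omega)$ (which holds since $\partial\Omega \in C^2$), and $\bfu_0 \in \bfE_r^q(\Omega)$ by hypothesis, so Lemma \ref{L5.1} applies and produces a unique solution $(\bfU,\pi)$ of the linear Stokes problem \eqref{5.1}, \eqref{1.2}--\eqref{1.4} in the stated class, satisfying \eqref{5.2}.

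The second step is the identification $\bfU = \bfu$. I would simply invoke the Yosida-approximation argument carried out in subsection 5.3: applying $J^{(k)}_q$ to both the weak formulation \eqref{3.1} (with test functions $[J^{(k)}_q\bfw]\vartheta$) and to the linear problem, one finds that $J^{(k)}_q\bfu$ and $J^{(k)}_q\bfU$ solve the same initial-value problem \eqref{5.4} in $\Ls^q(\Omega)$ within the uniqueness class $W^{1,r}(0,T;\Ls^q(\Omega))\cap L^r(0,T;\bfW^{2,q}(\Omega))$, whence $J^{(k)}_q\bfu(t) = J^{(k)}_q\bfU(t)$ for a.a.\ $t$, and letting $k\to\infty$ gives $\bfu = \bfU$ a.e.\ in $(0,T)$. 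Consequently $\bfu$ inherits the regularity $\bfu \in W^{1,r}(0,T;\, \bfL^q(\Omega))\cap L^r(0,T;\, \bfW^{2,q}(\Omega))$ and the estimate \eqref{5.2} with $\bfU$ replaced by $\bfu$.

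The third step handles the pressure. From $\bfg = P_q\bff - P_q(\bfu\cdot\nabla\bfu)$ one rewrites \eqref{5.1} as $\partial_t\bfu + \bfu\cdot\nabla\bfu = -\nabla(\pi+\zeta) + \nu\Delta\bfu + \bff$ where $\nabla\zeta = (I-P_q)(\bfu\cdot\nabla\bfu - \bff) \in L^r(0,T;\, \bfG_q(\Omega))$; this decomposition is legitimate by \cite[Section III.1]{Ga1} since $(I-P_q)$ maps into the gradient subspace $\bfG_q(\Omega)$. Thus $p := \pi + \zeta$ is an associated pressure in the sense of subsection 4.1, and $\nabla p \in L^r(0,T;\, \bfL^q(\Omega))$. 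To upgrade this to $p \in L^r(0,T;\, L^{3q/(3-q)}(\Omega))$ I would, for a.a.\ $t$, normalise $p(\cdot,t)$ to have zero mean over $\Omega$ and apply the Sobolev--Poincaré inequality $\|p\|_{3q/(3-q)} \le c\,\|\nabla p\|_q$, valid on the bounded domain $\Omega$ with the exponent $3q/(3-q)$ being the Sobolev conjugate of $q$ (here $q < \tfrac32 < 3$, so $3q/(3-q)$ is finite and $>q$); integrating the $r$-th power in $t$ gives membership in $L^r(0,T;\, L^{3q/(3-q)}(\Omega))$. Combining with \eqref{5.2} and the bound on $\|\bfg\|_q$ produces estimate \eqref{5.5}. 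Finally, since $\bfu$ now has $\bfW^{2,q}$-regularity in space and $\bfL^q$-regular time derivative, all terms in \eqref{5.1} (equivalently \eqref{1.1}) are functions in $\bfL^q$, so the equations hold a.e.\ in $Q_T$; and because $\bfu(\cdot,t) \in \bfW^{2,q}(\Omega) \cap D(A_q)$ for a.a.\ $t$, the trace of $[\bbTd(\bfu)\cdot\bfn]_\tau + \gamma\bfu_\tau$ on $\partial\Omega$ vanishes a.e., i.e.\ \eqref{1.3} holds a.e.\ on $\Gamma_T$, and \eqref{1.3}a is part of $\bfu \in \Wts^{1,q}$.

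The main obstacle I anticipate is the identification step $\bfU = \bfu$, because it requires the full Yosida-regularisation machinery (commutation of $J^{(k)}_q$ with $A_q$ and with $P_q$, the consistency $J^{(k)}_q = J^{(k)}_s$ on the intersection of solenoidal spaces, self-adjointness of $J^{(k)}_2$, and the integration-by-parts manipulation of the bilinear and boundary terms using the boundary condition in $D(A_q)$) together with a uniqueness theorem for the linear non-steady Stokes problem in the anisotropic class; fortunately this has already been laid out in subsection 5.3, so in the proof itself it suffices to reference that argument. A secondary technical point is confirming the interpolation exponent and the admissible range $1<q<\tfrac32$, $1<r<2$ in step one — routine but needing care — and ensuring that the weak solution's a priori bounds genuinely place $\bfu\cdot\nabla\bfu$ in the required space.
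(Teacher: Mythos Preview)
Your proposal is correct and follows essentially the same route as the paper's own argument in subsections 5.2--5.4: verify $\bfu\cdot\nabla\bfu\in L^r(0,T;\bfL^q(\Omega))$ by interpolation, apply Lemma~\ref{L5.1} with $\bfg=P_q\bff-P_q(\bfu\cdot\nabla\bfu)$, identify $\bfU=\bfu$ via the Yosida approximation of subsection~5.3, and then set $p:=\pi+\zeta$ with $\nabla\zeta=(I-P_q)(\bfu\cdot\nabla\bfu-\bff)$. Your explicit Sobolev--Poincar\'e step from $\nabla p\in L^r(0,T;\bfL^q(\Omega))$ to $p\in L^r(0,T;L^{3q/(3-q)}(\Omega))$ and your verification that \eqref{1.1}--\eqref{1.3} hold a.e.\ make explicit what the paper leaves implicit, but the substance is the same.
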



\section{An interior regularity of the associated pressure} \label{S6}

{\bf 6.1. On previous results on the interior regularity of
velocity and pressure.} \ The next lemma recalls the well known
Serrin's result on the interior regularity of weak solutions to
the system (\ref{1.1}), (\ref{1.2}). (See e.g.~\cite{Oh},
\cite{Se} or \cite{Ga2}.) It concerns weak solutions in
$\Omega_1\times(t_1,t_2)$, where $\Omega_1$ is a sub-domain of
$\Omega$, independently of boundary conditions on $\\Gamma_T$.

\begin{lemma} \label{L6.1}
Let $\Omega_1$ be a sub-domain of $\Omega$, $0\leq t_1<t_2\leq T$
and let $\bfu$ be a weak solution to the system (\ref{1.1}),
(\ref{1.2}) with $\bff=\bfzero$ in $\Omega_1\times(t_1,t_2)$. Let
$\bfu\in L^r(t_1,t_2;\, \bfL^s(\Omega_1))$, where
$r\in[2,\infty)$, $s\in(3,\infty]$ and $2/r+3/s=1$. Then, if
$\Omega_2\subset\subset\Omega_1$ and $0<2\epsilon<t_2-t_1$,
solution $\bfu$ has all spatial derivatives (of all orders)
bounded in $\Omega_2\times(t_1+\epsilon,t_2-\epsilon)$.
\end{lemma}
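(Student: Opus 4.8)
\ The assertion is interior and scale critical, so the plan is to localise the system strictly inside $\Omega_1\times(t_1,t_2)$, to treat the convective term as the right-hand side of a linear non-stationary Stokes problem, and then to bootstrap by means of interior $L^p$--$L^q$ estimates for that problem. As all the quantities involved will be compactly supported inside $\Omega_1$, the boundary conditions on $\Gamma_T$ never enter, which is why the conclusion does not depend on them. Concretely, I would fix nested cylinders $\Omega_2\subset\subset\Omega'\subset\subset\Omega_1$ and times $t_1<t_1'<t_2'<t_2$, choose cut-offs $\phi\in C_0^\infty(\Omega')$ with $\phi\equiv 1$ on $\Omega_2$ and $\theta\in C_0^\infty((t_1',t_2'))$, and introduce a local pressure $p$ for $\bfu$ on $\Omega_1\times(t_1,t_2)$; taking the divergence of (\ref{1.1}) with $\bff=\bfzero$ shows that $p$ may be normalised so that $-\Delta p=\partial_i\partial_j(u_iu_j)$ in $\Omega_1$ at a.a.~times. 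The function $\bfw:=\phi\,\theta\,\bfu$ then solves an inhomogeneous Stokes system whose right-hand side is $\phi\theta\,\div(\bfu\otimes\bfu)$ together with commutator terms carrying $\nabla\phi$, $\theta'$, $\bfu$, $\nabla\bfu$ and $p$; the non-zero divergence $\div\bfw=\theta\,\bfu\cdot\nabla\phi$ is removed by subtracting a Bogovskij corrector, after which interior maximal-regularity estimates for the Stokes system (cf.~\cite{GiSo}) apply.

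The heart of the matter is the bootstrap. Using the divergence form, $\bfu\in L^r(t_1,t_2;\bfL^s(\Omega_1))$ with $2/r+3/s=1$ yields $\bfu\otimes\bfu$ in a local space for which the Stokes estimate followed by the parabolic Sobolev embedding merely reproduces the critical class for $\bfu$, so a naive iteration stalls. To create room, I would first combine the Serrin bound with the energy information $\bfu\in L^\infty_{\mathrm{loc}}(t_1,t_2;\bfL^2(\Omega_1))\cap L^2_{\mathrm{loc}}(t_1,t_2;\bfW^{1,2}(\Omega_1))$ by Gagliardo--Nirenberg interpolation, obtaining $\bfu\in L^{\tilde r}(t_1',t_2';\bfL^{\tilde s}(\Omega'))$ for a range of exponents with $2/\tilde r+3/\tilde s$ strictly below $1$; here the hypothesis $s>3$ keeps the relevant exponent $r$ finite and makes this gain available. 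Starting from a strictly subcritical exponent, each application of the interior Stokes estimate to $\bfw$ places $\bfu$ in a space with a strictly smaller parabolic integrability index, so after finitely many steps one reaches $\bfu\in L^\infty(\Omega_2\times(t_1',t_2'))$; the local pressure then inherits, up to a harmonic (hence smooth) summand, the same interior spatial integrability as $\bfu\otimes\bfu$, by Calder\'on--Zygmund theory applied to $-\Delta p=\partial_i\partial_j(u_iu_j)$.

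Once $\bfu$ is locally bounded, the remainder is the classical parabolic bootstrap. With $\bfu$ bounded and $\nabla\bfu\in L^2_{\mathrm{loc}}$ one has $\bfu\cdot\nabla\bfu\in L^2_{\mathrm{loc}}$, the interior Stokes estimate then puts $\nabla^2\bfu$ and $\partial_t\bfu$ in $L^2_{\mathrm{loc}}$, hence $\nabla\bfu\in L^\infty_{\mathrm{loc}}(\bfL^2)\cap L^2_{\mathrm{loc}}(\bfW^{1,2})$, and interpolation improves the spatial integrability of $\nabla\bfu$. The induction step is: if, on a cylinder, all spatial derivatives of $\bfu$ up to order $k$ lie in every $L^p_{\mathrm{loc}}$, then $\bfu\cdot\nabla\bfu$ lies in every $W^{k-1,p}_{\mathrm{loc}}$ in the space variables, the local pressure has the same interior spatial regularity as $\bfu\otimes\bfu$, and applying the Stokes estimates to the system differentiated in $\bfx$ places all spatial derivatives of $\bfu$ of order up to $k+1$ in every $L^p_{\mathrm{loc}}$; Sobolev embedding into spaces of functions continuous in $\bfx$ uniformly in $t$ then makes every spatial derivative of $\bfu$ bounded on $\Omega_2\times(t_1+\epsilon,t_2-\epsilon)$, which is the claim (compare the classical treatments \cite{Se}, \cite{Oh}, \cite{Ga2}).

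The step I expect to be the real obstacle is precisely the opening of this bootstrap: the hypotheses put $(r,s)$ exactly on the scale-invariant line $2/r+3/s=1$, so the linear estimates on their own only return a critical space, and it is the interplay with the a priori energy bounds---together with the strict inequality $s>3$---that supplies the slack needed to begin iterating. All the other ingredients (commutator bookkeeping with the cut-offs, the Bogovskij correction, and interior $L^p$ theory for the Stokes and Poisson equations) are routine.
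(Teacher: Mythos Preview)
The paper does not give its own proof of Lemma~\ref{L6.1}: it is explicitly introduced as a recall of ``the well known Serrin's result'' with citations to \cite{Oh}, \cite{Se}, \cite{Ga2}, and no argument is supplied. So there is no proof in the paper to compare yours against.

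That said, your sketch is a reasonable outline of the classical route to Serrin's interior regularity theorem and is essentially what one finds in the references the paper cites. The key structural points---localisation by cut-offs, treating $\bfu\cdot\nabla\bfu$ as a right-hand side for a linear parabolic (Stokes) problem, and a bootstrap that must be \emph{opened} by combining the critical Serrin bound with the energy estimates via interpolation---are all correctly identified, as is the reason the boundary conditions play no role. One minor remark: the original arguments of Serrin and Ohyama proceed via local estimates for the heat equation (or for vorticity) rather than via maximal regularity for the Stokes system in the Giga--Sohr sense; your Stokes-based formulation is a more modern packaging of the same idea and works just as well, but if you want to match the cited sources literally you would phrase the linear estimates in terms of the heat semigroup acting on the localised equation for $\bfu$ (or $\curl\bfu$). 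Either way, there is no gap in your strategy.
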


Note that Lemma \ref{L6.1} uses no assumptions on boundary
conditions, satisfied by $\bfu$ on $\partial\Omega\times(0,T)$.
The assumption that $\bfu$ is a weak solution to the system
(\ref{1.1}), (\ref{1.2}) in $\Omega_1\times(t_1,t_2)$ means that
$\bfu\in L^{\infty}(t_1,t_2;\, \bfL^{\infty}(\Omega_1))\cap
L^2(t_1,t_2;\, \bfW^{1,2}(\Omega_1))$, $\div\bfu=0$ holds in the
sense of distributions in $\Omega_1\times(t_1,t_2)$ and $\bfu$
satisfies (\ref{3.1}) for all infinitely differentiable
divergence--free test functions $\bfphi$ that have a compact
support in $\Omega_1\times(t_1,t_2)$. (Then the last integral on
the left hand side and both integrals on the right hand side are
equal to zero.) Also note that applying the results of
\cite{Sereg}, one can add to the conclusions of Lemma \ref{L6.1}
that $\bfu$ is H\"older--continuous in
$\Omega_2\times(t_1+\epsilon,t_2-\epsilon)$. Lemma \ref{L6.1}
provides no information on the associated pressure $p$ or the time
derivative $\partial_t\bfu$ in $\Omega_2\times(t_1+\epsilon,t_2-
\epsilon)$. The known results on the regularity of $\bfu$ and
$\partial_t$ in $\Omega_2\times(t_1+\epsilon,t_2-\epsilon)$, under
the assumptions that $\bfu$ is a weak solution of (\ref{1.1}),
(\ref{1.2}) in $\Omega\times(t_1,t_2)$ satisfying the conditions
formulated in Lemma \ref{6.1} in $\Omega_1\times(t_1,t_2)$, say:

\begin{list}{}
{\setlength{\topsep 4pt}
\setlength{\itemsep 2pt}
\setlength{\leftmargin 17pt}
\setlength{\rightmargin 0pt}
\setlength{\labelwidth 8pt}}

\item[a)]
If $\Omega=\R^3$ then $p$, $\partial_t\bfu$ and all their spatial
derivatives (of all orders) are in $L^{\infty}(\Omega_2
\times(t_1+\epsilon,t_2-\epsilon)$, see \cite{Ne1}, \cite{Ne2}
\cite{SkaKu}.

\item[b)]
If $\Omega$ is a bounded or exterior domain $\R^3$ with the
boundary of the class $C^{2+(h)}$ for some $h>0$ and $\bfu$
satisfies the no--slip boundary condition $\bfu=\bfzero$ on
$\partial\Omega\times(0,T)$ then $p$ and $\partial_t\bfu$ have all
spatial derivatives (of all orders) in
$L^q(t_1+\epsilon,t_2-\epsilon;\, L^{\infty}(\Omega_2))$ for any
$q\in(1,2)$, see \cite{NePe1}, \cite{Ne1}, \cite{Ne2} or
\cite{SkaKu}.

\item[c)]
If $\Omega$ is a bounded domain $\R^3$ with the boundary of the
class $C^{2+(h)}$ for some $h>0$ and $\bfu$ satisfies the
Navier--type boundary conditions
\begin{displaymath}
\bfu\cdot\bfn=0, \qquad \curl\, \bfu\times\bfn=\bfzero \qquad
\mbox{on}\ \partial\Omega\times(t_1,t_2)
\end{displaymath}
then $p$ and $\partial_t\bfu$ have the same regularity in
$\Omega_2\times(t_1+\epsilon,t_2-\epsilon)$ as stated in item a),
see \cite{NeAlB}.

\end{list}

\noindent
In the proofs, it is always sufficient to show that the
aforementioned statements hold for $p$. The same statements on
$\partial_t\bfu$ follow from the fact that $\nabla p$ and
$\partial_t\bfu$ are interconnected through the Navier--Stokes
equation (\ref{1.1}).

\vspace{4pt} \noindent
{\bf 6.2. An interior regularity of $p$ in case of Navier's
boundary conditions.} \ We further assume that $\Omega$ and $T$
are as in Theorem \ref{T5.1} and $\bff=\bfzero$. The main result
of this section says:

\begin{theorem} \label{T6.1}
Let $\Omega$ and $T$ be as in Theorem \ref{T5.1} and
$\bff=\bfzero$. Let $\bfu$ be a weak solution to the problem
(\ref{1.1})--(\ref{1.4}). Let $\Omega_1$ be a sub-domain of
$\Omega$, $0<t_1<t_2\leq T$ and let $\bfu\in L^r(t_1,t_2;\,
\bfL^s(\Omega_1))$, where $r\in[2,\infty)$, $s\in(3,\infty]$ and
$2/r+3/s=1$. Finally, let $\Omega_3\subset\subset\Omega_1$ and
$0<\epsilon<t_2-t_1$. Then $p$ can be chosen so that all its
spatial derivatives (of all orders) are in
$L^4(t_1+\epsilon,t_2-\epsilon;\, L^{\infty}(\Omega_3))$.
Similarly, $\partial_t\bfu$ and all its spatial derivatives (of
all orders) are in $L^4(t_1+\epsilon,t_2-\epsilon;\,
\bfL^{\infty}(\Omega_3))$.
\end{theorem}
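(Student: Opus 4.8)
The plan is to localize everything to the interior of $\Omega_1\times(t_1,t_2)$, where the boundary conditions play no role, and to combine the spatial smoothness of $\bfu$ coming from Serrin's lemma with an analysis of the Poisson equation satisfied by the associated pressure. First I would fix a small $\delta\in(0,\epsilon)$ and auxiliary sub-domains $\Omega_3\subset\subset\Omega_2\subset\subset\Omega_1$, together with a cut-off $\eta\in C^{\infty}_0(\Omega_2)$ with $\eta\equiv1$ on a neighbourhood of $\overline{\Omega_3}$. By Lemma \ref{L6.1} applied on $\Omega_2\times(t_1+\tfrac{\delta}{2},t_2-\tfrac{\delta}{2})$, the velocity $\bfu$ and all its spatial derivatives of all orders are bounded on $\Omega_2\times(t_1+\delta,t_2-\delta)$. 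Since $(\bfu,p)$ solves $(\ref{1.1})$, $(\ref{1.2})$ with $\bff=\bfzero$ in the sense of distributions in $Q_T$, taking the divergence of $(\ref{1.1})$ and using $\div\bfu=0$ gives $\Delta p=g$ in $\Omega_1\times(t_1,t_2)$, where $g:=-(\partial_j u_i)(\partial_i u_j)=-\div\div(\bfu\otimes\bfu)$; by the Serrin step $g$ and all its spatial derivatives are bounded on $\Omega_2\times(t_1+\delta,t_2-\delta)$.

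Next I would split the pressure in the interior by setting $w(\, .\, ,t):=E*\bigl(\eta\, g(\, .\, ,t)\bigr)$, where $E$ is the Newtonian kernel in $\R^3$. Then $\Delta w=\eta g$, so $h:=p-w$ is harmonic in the interior of $\{\eta=1\}\supset\overline{\Omega_3}$ for a.a.\ $t\in(t_1+\delta,t_2-\delta)$. Because $\eta g$ is compactly supported with all spatial derivatives bounded uniformly in $t$, Calder\'on--Zygmund and Schauder estimates give that $w$ and all its spatial derivatives lie in $L^{\infty}\bigl(t_1+\delta,t_2-\delta;\ L^{\infty}(\R^3)\bigr)$. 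For the harmonic remainder I would use interior estimates for harmonic functions: since $D^{\alpha}h=D^{\alpha}(h-c)$ for every constant $c$, for each multi-index $\alpha$ and each $\Omega_3\subset\subset\Omega'_2\subset\subset\{\eta=1\}$ one has $\|D^{\alpha}h(\, .\, ,t)\|_{\infty;\,\Omega_3}\le c(\alpha,\Omega_3,\Omega'_2)\,\|\nabla h(\, .\, ,t)\|_{1;\,\Omega'_2}$, and, normalising the associated pressure by $\int_{\Omega_3}p(\, .\, ,t)\,\rmd\bfx=0$ (admissible by Theorem \ref{T4.1}), the full pressure on $\Omega_3$ is then controlled by $\|\nabla h(\, .\, ,t)\|_{1;\,\Omega'_2}$ together with the quantities coming from $w$, which are bounded in $t$.

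It remains to bound $\|\nabla h(\, .\, ,t)\|_{1;\,\Omega'_2}\le\|\nabla p(\, .\, ,t)\|_{1;\,\Omega'_2}+\|\nabla w(\, .\, ,t)\|_{1;\,\Omega'_2}$ in $L^4(t_1+\delta,t_2-\delta)$. The contribution of $w$ is bounded in $t$; for $p$ I would feed in the global information. From the energy bounds $\bfu\in L^{\infty}(0,T;\Ls^2(\Omega))\cap L^2(0,T;\Wts^{1,2}(\Omega))$ one gets, by interpolation, $\bfu\in L^8(0,T;\bfL^{12/5}(\Omega))$, hence $\bfu\otimes\bfu\in L^4(0,T;\bfL^{6/5}(\Omega))$, which produces a pressure component of the form $R_iR_j(u_iu_j)\in L^4(0,T;L^{6/5}(\Omega))$ (Riesz transforms, bounded on $L^{6/5}$ of the bounded $C^2$-domain); the remaining, Stokes-type component of the associated pressure is a function harmonic in $\Omega$ and is controlled through Theorem \ref{T5.1}. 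Restricting these to $\Omega'_2$ and combining with the harmonic-function estimate above, one concludes that $p$ (after the above normalisation) has all spatial derivatives in $L^4\bigl(t_1+\epsilon,t_2-\epsilon;\ L^{\infty}(\Omega_3)\bigr)$. Finally, solving $(\ref{1.1})$ for $\partial_t\bfu=\nu\Delta\bfu-\bfu\cdot\nabla\bfu-\nabla p$ on $\Omega_3$ and using that the first two terms are bounded there by the Serrin step while $\nabla p$ has just been shown to have all spatial derivatives in $L^4\bigl(t_1+\epsilon,t_2-\epsilon;\ \bfL^{\infty}(\Omega_3)\bigr)$, the same integrability follows for $\partial_t\bfu$ and all its spatial derivatives.

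The main obstacle is the non-locality of the pressure: the interior smoothness of $p$ on $\Omega_3$ cannot be read off from the equation on $\Omega_3$ alone, because $p$ is reconstructed, roughly, as a singular integral over all of $\Omega$, so its behaviour near $\partial\Omega$ (where only the weak-solution and Theorem \ref{T5.1} bounds are available) enters through the harmonic remainder. Thus the delicate point is the time-integrability bookkeeping: one must match the gain of spatial regularity in the interior (from Serrin's lemma, giving $L^{\infty}$ control there) against the unavoidable loss in integrability in $t$ coming from the far field, and check that the combination of the $L^4$-in-time bound for the quadratic term with the bounds of Theorem \ref{T5.1} still yields the exponent $4$. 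A secondary technical point is the correct choice of the additive normalisation of the associated pressure, needed so that only $\nabla h$ — rather than $h$ itself — has to be estimated on the larger set.
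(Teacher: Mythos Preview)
Your localisation strategy (Poisson equation for $p$, Newtonian potential of the cut-off source, harmonic remainder controlled by interior estimates) mirrors the paper's. The gap is in the step where you try to bound the harmonic-in-$\Omega$ part of $p$ (your ``Stokes-type component'') in $L^4$ time by invoking Theorem~\ref{T5.1}. Theorem~\ref{T5.1} gives $p\in L^r(0,T;W^{1,q}(\Omega))$ only for $r<2$, and your decomposition $p=R_iR_j(u_iu_j)+p_{\rm harm}$ yields $\|p_{\rm harm}\|_{1;\Omega''}\le\|p\|_{1;\Omega''}+\|R_iR_j(u_iu_j)\|_{1;\Omega''}$, whose first term is in $L^r_t$ with $r<2$ and cannot be upgraded by the $L^4_t$ bound on the second. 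The same obstruction appears if you try $\|\nabla p\|_{1;\Omega_2'}$: writing $\nabla p=-\partial_t\bfu-\bfu\cdot\nabla\bfu+\nu\Delta\bfu$, the local Serrin smoothness controls the last two terms, but $\partial_t\bfu$ is only known in $L^r_t(L^q)$ with $r<2$ (again from Theorem~\ref{T5.1}), and improving this is precisely the content of the theorem you are trying to prove. So your argument is circular at exactly the place where the exponent $4$ has to come from.

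The paper resolves this by never estimating $\nabla p$ or $\partial_t\bfu$ globally. Instead, the critical term (called $P^{(1)}_2(\bfx_0)$) has the form $\int_{\Omega}\bfV\cdot\nabla p$ for a compactly supported smooth field $\bfV$; one performs the Helmholtz decomposition $\bfV=\nabla\psi+\bfz$ \emph{in all of $\Omega$} with the Neumann condition $\partial\psi/\partial\bfn=0$. The divergence-free part $\bfz$ drops out, and after substituting $\nabla p=-\partial_t\bfu-\bfu\cdot\nabla\bfu+\nu\Delta\bfu$ the term $\int_{\Omega}\nabla\psi\cdot\partial_t\bfu$ \emph{vanishes} (integrate by parts using $\div\partial_t\bfu=0$ and $\partial_t\bfu\cdot\bfn=0$). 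The convective contribution becomes $\int_{\Omega}\nabla^2\psi:(\bfu\otimes\bfu)$, bounded by $\|\bfu\|_2^2\in L^{\infty}_t$. The viscous contribution $\int_{\Omega}\nabla\psi\cdot\nu\Delta\bfu$ is integrated by parts; the Navier condition $[\bbT_{\rm d}(\bfu)\cdot\bfn]_{\tau}+\gamma\bfu=\bfzero$ turns the boundary term into $c\int_{\partial\Omega}|\bfu|\,\rmd S$, and the trace estimate $\int_{\partial\Omega}|\bfu|^2\,\rmd S\le c\,\|\bfu\|_2\,\|\bfu\|_{1,2}$ gives a bound $c+c\,\|\bfu\|_{1,2}^{1/2}\in L^4_t$. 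This is where the exponent $4$ actually originates, and it uses the specific boundary condition --- your proposal bypasses this and therefore cannot reach $L^4$.
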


\begin{proof}
There exists $t_*\in(0,t_1)$ such that $\bfu(\, .\,
,t_*)\in\Wts^{1,2}(\Omega)\subset\bfE_r^q(\Omega)$ for all $r$ and
$q$, considered in Theorem \ref{T5.1}. Hence $\bfu\in L^r(t_*,T;\
\bfW^{2,q}(\Omega))\cap W^{1,r}(t_*,T;\, \bfL^q(\Omega))$ and $p$
can be chosen so that $p\in L^r(t_*,T;\, L^{3q/(3-q)}(\Omega))$.
Let $\epsilon$ and $\Omega_2$ be the number and domain,
respectively, given by Lemma \ref{L6.1}. We may assume that
$\Omega_2$ and $\Omega_3$ are chosen so that
$\emptyset\not=\Omega_3\subset\subset\Omega_2\subset\subset\Omega$.

Applying the operator of divergence to equation (\ref{1.1}), we
obtain the equation
\begin{equation}
\Delta p\ =\ -\nabla\bfu:(\nabla\bfu)^T, \label{6.1}
\end{equation}
which holds in the sense of of distributions in $Q_T$. Taking into
account that $p$ is at least locally integrable in
$\Omega_1\times(t_1,t_2)$, we obtain from (\ref{6.1}) that
\begin{displaymath}
\int_{t_1}^{t_2}\theta(t)\int_{\Omega_1} \bigl[ p\,
\Delta\varphi(\bfx)+\nabla\bfu:(\nabla\bfu)^T\,
\varphi(\bfx)\bigr]\; \rmd\bfx\, \rmd t\ =\ 0
\end{displaymath}
for all $\theta\in C^{\infty}_0((t_1,t_2))$ and $\varphi\in
C^{\infty}_0(\Omega_1)$. From this, we deduce that equation
(\ref{6.1}) holds in $\Omega_1$ in the sense of distributions at
a.a.~fixed time instants $t\in(t_1+\epsilon,t_2-\epsilon)$. Let
further $t$ be one of these time instants and let $t$ be also
chosen so that $\bfu(\, .\, ,t)\in\bfW^{2,q}(\Omega)$,
$\partial_t\bfu(\, .\, ,t)\in\bfL^q(\Omega)$ and $p(\, .\, ,t)\in
L^{3q/(3-q)}(\Omega)$. As $p(\, .\, ,t)\in L^1_{loc}(\Omega_1)$
and the right hand side of (\ref{6.1}) (at the fixed time $t$) is
infinitely differentiable in the spatial variable in $\Omega_2$,
the function $p(\, .\, ,t)$ is also infinitely differentiable in
$\Omega_2$, see e.g.~\cite{FraFio}.

Let $\bfx_0\in\Omega_3$ and $0<\rho_1<\rho_2$ be so small that
$B_{\rho_2}(\bfx_0)\subset\Omega_2$. Define an infinitely
differentiable non-increasing cut--off function $\eta$ in
$[0,\infty)$ by the formula
\begin{displaymath}
\eta(\sigma)\ \left\{ \begin{array}{ll} =1 & \mbox{for}\
0\leq\sigma\leq\rho_1, \\ [1pt] \in(0,1) & \mbox{for}\
\rho_1<\sigma<\rho_2, \\ [1pt] =0 & \mbox{for}\ \rho_2\leq\sigma.
\end{array} \right.
\end{displaymath}
Let $\bfx\in B_{\rho_1}(\bfx_0)$ and $\bfe$ be a constant unit
vector in $\R^3$. Then
\begin{align*}
\nabla_{\!\bfx}\br p(\bfx,t)\cdot\bfe\ &=\
\eta\bigl(|\bfx-\bfx_0|\bigr)\, \nabla_{\!\bfx}\br
p(\bfx,t)\cdot\bfe \\
&=\ -\frac{1}{4\pi}\int_{\R^3} \frac{1}{|\bfy-\bfx|}\
\Delta_{\bfy}\bigl[ \eta\bigl(|\bfy-\bfx_0|\bigr)\,
\nabla_{\!\bfy}\br p(\bfy,t)\cdot\bfe \bigr]\; \rmd\bfy.
\end{align*}
Particularly, this also holds for $\bfx=\bfx_0$:
\begin{align}
\nabla_{\!\bfx}\br p(\bfx,t)\cdot\bfe\,
\bigl|_{\bfx=\bfx_0}\bigr.\ &=\ -\frac{1}{4\pi}\int_{\R^3}
\frac{1}{|\bfy-\bfx_0|}\ \Delta_{\bfy}\bigl[
\eta\bigl(|\bfy-\bfx_0|\bigr)\, \nabla_{\!\bfy}\br
p(\bfy,t)\cdot\bfe \bigr]\; \rmd\bfy \nonumber \\
&=\ -\frac{1}{4\pi}\int_{\R^3} \frac{1}{|\bfy|}\
\Delta_{\bfy}\bigl[ \eta\bigl(|\bfy|\bigr)\, \nabla_{\!\bfy}\br
p(\bfx_0+\bfy,t)\cdot\bfe \bigr]\; \rmd\bfy \nonumber \\
&=\ -\frac{1}{4\pi}\, \bigl[
P^{(1)}(\bfx_0)+2P^{(2)}(\bfx_0)+P^{(3)}(\bfx_0) \bigr],
\label{6.3}
\end{align}
where
\begin{align*}
P^{(1)}(\bfx_0)\ &=\ \int_{B_{\rho_2}(\bfzero)} \frac{1}{|\bfy|}\
\Delta_{\bfy}\eta\bigl(|\bfy|\bigr)\, \bigl[ \nabla_{\!\bfy}\br
p(\bfx_0+\bfy,t)\cdot\bfe \bigr]\; \rmd\bfy, \\
P^{(2)}(\bfx_0)\ &=\ \int_{B_{\rho_2}(\bfzero)} \frac{1}{|\bfy|}\
\nabla_{\!\bfy}\br\eta\bigl(|\bfy|\bigr)\cdot\nabla_{\!\bfy}\br
\bigl[ \nabla_{\!\bfy}\br p(\bfx_0+\bfy,t)\cdot\bfe \bigr]\;
\rmd\bfy, \\
P^{(3)}(\bfx_0)\ &=\ \int_{B_{\rho_2}(\bfzero)}
\frac{\eta\bigl(|\bfy|\bigr)}{|\bfy|}\ \Delta_{\bfy}\br\bigl[
\nabla_{\!\bfy}\br p(\bfx_0+\bfy,t)\cdot\bfe \bigr]\; \rmd\bfy.
\end{align*}

\noindent
{\it The  estimate of $P^{(3)}(\bfx_0)$.} \ The estimate of the
last term is easy:
\begin{align}
\bigl| P^{(3)}(\bfx_0) \bigr|\ &=\ \biggl|
\int_{B_{\rho_2}(\bfzero)} \Bigl( \nabla_{\!\bfy}\,
\frac{\eta\bigl(|\bfy|\bigr)}{|\bfy|}\cdot\bfe
\Bigr)\, \Delta_{\bfy} p(\bfx_0+\bfy,t)\; \rmd\bfy \biggr|
\nonumber \\
&=\ \biggl| \int_{B_{\rho_2}(\bfzero)} \Bigl( \nabla_{\!\bfy}\,
\frac{\eta\bigl(|\bfy|\bigr)}{|\bfy|}\cdot\bfe \Bigr)\, \bigl[
\nabla_{\!\bfy}\br\bfu(\bfx_0+\bfy,t):\bigl(
\nabla_{\!\bfy}\br\bfu(\bfx_0+
\bfy,t)\bigr)^T\bigr]\; \rmd\bfy \biggr| \nonumber \\
&\leq\ c\, \int_{B_{\rho_2}(\bfzero)} \Bigl| \nabla_{\!\bfy}\,
\frac{\eta\bigl(|\bfy|\bigr)}{|\bfy|}\cdot\bfe \Bigr|\; \rmd\bfy\
\leq\ c. \label{6.4}
\end{align}

\noindent
{\it The estimate of $P^{(2)}(\bfx_0)$.} \ We can write
\begin{displaymath}
\frac{1}{|\bfy|}\ \nabla_{\!\bfy}\br\eta\bigl(|\bfy|\bigr)\ =\
\nabla_{\!\bfy}\br\cF\bigl(|\bfy|\bigr),
\end{displaymath}
where $\cF(s):=-\int_s^{\infty} \eta'(\sigma)/\sigma\; \rmd\sigma$
for $s\geq 0$. We observe that $\cF$ is constant on $[0,\rho_1]$,
equal to zero on $[\rho_2,\infty)$ and $\cF'(s)=\eta'(s)/s$ for
$s>0$. Thus, we have
\begin{align}
\bigl| P^{(2)}(\bfx_0) \bigr|\ &=\ \biggl|
\int_{B_{\rho_2}(\bfzero)} \nabla_{\!\bfy}\br
\cF\bigl(|\bfy|\bigr)\cdot \nabla_{\!\bfy}\br \bigl[
\nabla_{\!\bfy}\br p(\bfx_0+\bfy,t)\cdot\bfe \bigr]\;
\rmd\bfy \biggr| \nonumber \\
&=\ \biggl| \int_{B_{\rho_2}(\bfzero)}
\Delta_{\bfy}\cF\bigl(|\bfy|\bigr)\, \bfe\cdot \nabla_{\!\bfy}\br
p(\bfx_0+\bfy,t)\; \rmd\bfy \biggr|. \label{6.5}
\end{align}
The vector function $\Delta_{\bfy}\cF\bigl(|\bfy|\bigr)\, \bfe$
can be written in the form
\begin{equation}
\Delta_{\bfy}\cF\bigl(|\bfy|\bigr)\, \bfe\ =\
\nabla_{\!\bfy}\br\varphi(\bfy)+\bfw(\bfy), \label{6.6}
\end{equation}
where
\vspace{-8pt}
\begin{displaymath}
\varphi(\bfy)= \nabla_{\!\bfy}\br\cF\bigl(|\bfy|\bigr) \cdot\bfe,
\qquad \bfw(\bfy)=\Delta_{\bfy}\cF\bigl(|\bfy|\bigr)\, \bfe-
\nabla_{\!\bfy}\br\bigl[ \nabla_{\!\bfy}\br\cF\bigl(|\bfy|\bigr)
\cdot\bfe\bigr].
\end{displaymath}
The functions $\varphi$ and $\bfw$ are infinitely differentiable
in $\R^3$ and $\varphi=0$, $\bfw=\bfzero$ in $\R^3\smallsetminus
B_{\rho_2}(\bfzero)$. Since
\begin{displaymath}
\div\bfw\ =\ \nabla_{\!\bfy}\br\Delta_{\bfy}
\cF\bigl(|\bfy|\bigr)\cdot\bfe-\Delta_{\bfy}\br\bigl[
\nabla_{\!\bfy}\br\cF\bigl(|\bfy|\bigr) \cdot\bfe\bigr]\ =\ 0,
\end{displaymath}
(\ref{6.6}) in fact represents the Helmholtz decomposition of
$\Delta_{\bfy}\cF\bigl(|\bfy|\bigr)\, \bfe$ in
$B_{\rho_2}(\bfzero)$. Substituting from (\ref{6.6}) to
(\ref{6.5}), we obtain
\begin{align}
\bigl| P^{(2)}(\bfx_0) \bigr|\ &=\ \biggl| \int_{B_{\rho_2}
(\bfzero)} \bigl[\nabla_{\!\bfy}\br\varphi(\bfy)+\bfw(\bfy)\bigr]
\cdot\nabla_{\!\bfy}\br p(\bfx_0+\bfy,t)\; \rmd\bfy \biggr|
\nonumber \\
&=\ \biggl| \int_{B_{\rho_2} (\bfzero)}
\nabla_{\!\bfy}\br\varphi(\bfy) \cdot\nabla_{\!\bfy}\br
p(\bfx_0+\bfy,t)\; \rmd\bfy \biggr| \nonumber \\
&=\ \biggl| \int_{B_{\rho_2} (\bfzero)} \varphi(\bfy)\,
\Delta_{\bfy}p(\bfx_0+\bfy,t)\; \rmd\bfy \biggr| \nonumber \\
&=\ \biggl| \int_{B_{\rho_2} (\bfzero)} \varphi(\bfy)\, \bigl[
\nabla_{\!\bfy}\br\bfu(\bfx_0+\bfy,t):
\bigl(\nabla_{\!\bfy}\br\bfu(\bfx_0+\bfy,t)\bigr)^T \bigr]\;
\rmd\bfy \biggr| \nonumber \\
&\leq\ \int_{B_{\rho_2} (\bfzero)} |\varphi(\bfy)|\; \rmd\bfy \
\leq\ c. \label{6.11}
\end{align}

\noindent
{\it The  estimate of $P^{(1)}(\bfx_0)$.} \ Finally, we have
\begin{align}
P^{(1)}(\bfx_0)\ &=\ \int_{B_{\rho_2}(\bfzero)} \frac{1}{|\bfy|}\,
\nabla_{\!\bfy}\eta\bigl(|\bfy|\bigr)\cdot \nabla_{\!\bfy}\br
\bigl[\nabla_{\!\bfy}\br p(\bfx_0+\bfy,t)\cdot\bfe\bigr]\; \rmd\bfy
\nonumber \\
& \hspace{21pt} -\int_{B_{\rho_2}(\bfzero)}
\Bigl[\frac{\bfy}{|\bfy|^3}\cdot\nabla_{\!\bfy}\eta
\bigl(|\bfy|\bigr) \Bigr]\, \bigl[ \nabla_{\!\bfy}\br
p(\bfx_0+\bfy,t)\cdot\bfe\bigr]\; \rmd\bfy. \label{6.12}
\end{align}
The first integral coincides with the integral in the formula for
$P^{(2)}(\bfx_0)$ and it can be therefore treated in the same way.
The second integral on the right hand side of (\ref{6.12}) - let
us denote it by $P^{(1)}_2(\bfx_0)$ - represents the main
obstacle, which finally causes that $p$ and all its spatial
derivatives are only in $L^4(t_1+\epsilon,t_2-\epsilon;\,
L^{\infty}(\Omega_3))$ and not in
$L^{\infty}(t_1+\epsilon,t_2-\epsilon;\, L^{\infty}(\Omega_3))$,
as in the cases from items a) and c) in subsection 6.1. The
integral can be written in the form
\begin{align}
P^{(1)}_2(\bfx_0)\ &=\ \int_{B_{\rho_2}(\bfzero)}
\frac{\eta'\bigl(|\bfy|\bigr)} {|\bfy|^2}\,
\bfe\cdot\nabla_{\!\bfy}\br p(\bfx_0+\bfy,t)\; \rmd\bfy \nonumber
\\
&=\ \int_{\Omega} \frac{\eta'\bigl(|\bfy-\bfx_0|\bigr)}
{|\bfy-\bfx_0|^2}\, \bfe\cdot\nabla_{\!\bfy}\br p(\bfy,t)\;
\rmd\bfy. \label{6.13}
\end{align}
Now, we use the Helmholtz decomposition
\begin{equation}
\frac{\eta'\bigl(|\bfy-\bfx_0|\bigr)} {|\bfy-\bfx_0|^2}\, \bfe\ =\
\nabla_{\!\bfy}\br\psi(\bfy)+\bfz(\bfy), \label{6.14}
\end{equation}
in the whole domain $\Omega$, where
\begin{align*}
\Delta_{\bfy}\psi(\bfy) &=
\div\Bigl(\frac{\eta'\bigl(|\bfy-\bfx_0|\bigr)}
{|\bfy-\bfx_0|^2}\, \bfe\Bigr) = \Bigl(
\frac{\eta''\bigl(|\bfy-\bfx_0|\bigr)} {|\bfy-\bfx_0|^3}-
\frac{\eta'\bigl(|\bfy-\bfx_0|\bigr) }{|\bfy-\bfx_0|^4} \Bigr)\,
(\bfy-\bfx_0)\cdot\bfe && \mbox{for}\ \bfy\in\Omega, \\
\frac{\partial\psi}{\partial\bfn}(\bfy)\ &=\ 0 && \mbox{for}\
\bfy\in \partial\Omega.
\end{align*}
As $\bfz$ is divergence--free and its normal component on
$\partial\Omega$ is zero, and the integral of
$\nabla\psi\cdot\partial_t\bfu$ is zero, we get
\begin{align}
P^{(1)}_2(\bfx_0)\ &=\ \int_{\Omega} \bigl[
\nabla_{\!\bfy}\br\psi(\bfy)+\bfz(\bfy) \bigr]\cdot
\nabla_{\!\bfy}\br p(\bfy,t)\; \rmd\bfy\ =\ \int_{\Omega}
\nabla_{\!\bfy}\br\psi(\bfy)\cdot
\nabla_{\!\bfy}\br p(\bfy,t)\; \rmd\bfy \nonumber \\
&=\ \int_{\Omega}
\nabla_{\!\bfy}\br\psi(\bfy)\cdot\bigl[\partial_t\bfu+\bfu\cdot
\nabla\bfu-\nu\Delta\bfu\bigr](\bfy,t)\; \rmd\bfy
\nonumber \\
&=\ \int_{\Omega} \nabla_{\!\bfy}\br\psi(\bfy)\cdot\bigl[\bfu\cdot
\nabla\bfu-\nu\Delta\bfu\bigr](\bfy,t)\; \rmd\bfy
\label{6.15}
\end{align}
We have
\begin{align}
\biggl| \int_{\Omega} & \nabla_{\!\bfy}\psi\cdot(\bfu\cdot
\nabla\bfu)\; \rmd\bfy \biggr|\ =\ \biggl|\int_{\Omega}
\nabla_{\!\bfy}^2\psi : (\bfu\otimes\bfu)\; \rmd\bfy\biggr|\ \leq\
c\int_{\Omega}|\bfu|^2\; \rmd\bfy\ \leq\ c, \label{6.16} \\
\biggl| \int_{\Omega} & \nabla_{\!\bfy}\psi\cdot \nu\Delta\bfu\;
\rmd\bfy \biggr|\ =\ \biggl|\int_{\Omega}\nabla_{\!\bfy}\psi\cdot
\div\bbTd(\bfu)\; \rmd\bfy\biggr| \nonumber \\
&=\ \biggl|\int_{\partial\Omega}\nabla_{\!\bfy}\psi\cdot
[\bbTd(\bfu)\cdot\bfn]\; \rmd S-
\int_{\Omega}\nabla_{\!\bfy}^2\psi : \bbTd(\bfu)\; \rmd\bfy
\biggr| \nonumber \\
&=\ \biggl|-\int_{\partial\Omega}\nabla_{\!\bfy}\psi\cdot
\gamma\bfu\; \rmd S-\int_{\Omega}\nabla_{\!\bfy}^2\psi :
\nu\br(\nabla\bfu)_s\; \rmd\bfy \biggr| \nonumber \\
&\leq\ c\int_{\partial\Omega}|\bfu|\; \rmd S+\biggl|
\int_{\Omega}\nabla_{\!\bfy}^2\psi : \nu\br\nabla\bfu\; \rmd\bfy
\biggr|\ =\ c\int_{\partial\Omega}|\bfu|\; \rmd S+\biggl|
\int_{\Omega}(\partial_i\partial_j\psi)\, \nu\,
(\partial_ju_i)\; \rmd\bfy \biggr| \nonumber \\
&=\ c\int_{\partial\Omega}|\bfu|\; \rmd S+\biggl|
\int_{\partial\Omega}(\partial_j\psi)\, n_i\, \nu\,
(\partial_ju_i)\; \rmd\bfy \biggr| \nonumber \\
&=\ c\int_{\partial\Omega}|\bfu|\; \rmd S+\nu\,
\biggl|\int_{\partial\Omega}(\partial_j\psi)\,
[\partial_j(n_iu_i)-(\partial_jn_i)\, u_i]\; \rmd\bfy \biggr|
\nonumber \\
&=\ c\int_{\partial\Omega}|\bfu|\; \rmd S+\nu\,
\biggl|\int_{\partial\Omega}(\partial_j\psi)\, (\partial_jn_i)\,
u_i\; \rmd\bfy \biggr| \nonumber \\
&\leq\ c\int_{\partial\Omega}|\bfu|\; \rmd S\ \leq\ c\, \biggl(
\int_{\partial\Omega}|\bfu|^2\; \rmd S\biggr)^{\! 1/2}\ \leq\ c\,
\bigl( \|\bfu\|_2+\|\bfu\|_2^{1/2}\, \|\bfu\|_{1,2}^{1/2} \bigr)
\nonumber \\
&\leq\ c+c\, \|\bfu\|_{1,2}^{1/2}. \label{6.17}
\end{align}
The right hand side is in $L^4(t_1+\epsilon,t_2-\epsilon)$. We
have used the estimate
\begin{displaymath}
\bigl| \nabla\psi \bigr|_{1+(h)}\ \leq\ c\, \Bigl| \Bigl(
\frac{\eta''\bigl(|\bfy-\bfx_0|\bigr)} {|\bfy-\bfx_0|^3}-
\frac{\eta'\bigl(|\bfy-\bfx_0|\bigr) }{|\bfy-\bfx_0|^4} \Bigr)\,
(\bfy-\bfx_0)\cdot\bfe \Bigr|_{0+(h)}\ \leq\ c,
\end{displaymath}
where $|\, .\, |_{1+(h)}$ and $|\, .\, |_{0+(h)}$ are the norms in
the H\"older spaces $\bfC^{1+(h)}(\overline{\Omega})$ and
$C^{0+(h)}(\overline{\Omega})$, respectively, see \cite{Na}. The
integral of $|\bfu|^2$ on $\partial\Omega$ has been estimated by
means of \cite[Theorem II.4.1]{Ga1}.

We have shown that the norm of $\,
\nabla_{\!\bfx}p(\bfx,t)|_{\bfx=\bfx_0}\cdot\bfe\, $ in
$L^4(t_1+\epsilon,t_2-\epsilon)$ is finite and inde\-pen\-dent of
vector $\bfe$ and a concrete position of point $\bfx_0$ in domain
$\Omega_3$. Hence $\nabla p\in L^4(0,T;\,
\bfL^{\infty}(\Omega_3))$. From this, one can deduce that $p$ can
be chosen so that $p\in L^4(0,T;\, L^{\infty}(\Omega_3))$.
Similarly, dealing with $D^{\alpha}_{\bfx} p(\bfx,t)$, where
$\alpha\equiv(\alpha_1,\alpha_2,\alpha_3)$ is an arbitrary
multi-index, instead of $p(\bfx,t)$, we show that $D^{\alpha} p\in
L^4(0,T;\, L^{\infty}(\Omega_3)$, too. The proof is completed
\end{proof}

\vspace{4pt} \noindent
{\bf Acknowledgement.} \ The authors have been supported by the
Academy of Sciences of the Czech Republic (RVO 67985840) and by
the Grant Agency of the Czech Republic, grant No.~17-01747S.

\end{document}